\documentclass[11pt]{article}
\usepackage{graphicx}
\usepackage{amscd}
\usepackage{amsmath}
\usepackage{caption}
\usepackage{amsfonts}
\usepackage{amssymb}
\usepackage{amsthm}
\usepackage{mathrsfs}
\usepackage{multicol}
\usepackage{color}
\usepackage[english]{babel}
\usepackage[T1]{fontenc}
\usepackage{textcomp}
\usepackage[utf8]{inputenc}
\usepackage{enumitem}
\usepackage{indentfirst}
\usepackage[title,toc]{appendix}
\usepackage[inner=2cm, outer=1.5cm, top=3cm, bottom=3cm]{geometry}
\usepackage{tikz}
\usetikzlibrary{positioning,fit,arrows.meta,shapes,matrix}
\usepackage{float}

\newcommand{\N}{\mathbb N}

\newcommand{\RR}{{{\rm I} \kern -.15em {\rm R} }}

\author{Chiara Cicolani\footnote{Fakultät für Mathematik und Naturwissenschaften, University of Wuppertal, Gaußstraße 20, 42119, Wuppertal, Germany. Email: cicolani@uni-wuppertal.de.}, Elisa Continelli\footnote{Dipartimento di Matematica "Tullio Levi-Civita", Università degli Studi di Padova, Via Trieste 63, 35121 Padova, Italy. Email: elisa.continelli@unipd.it.}, and Cristina Pignotti\footnote{Dipartimento di Ingegneria e Scienze dell'Informazione e Matematica, Università degli Studi dell'Aquila, Via Vetoio, Loc. Coppito, 67100 L'Aquila, Italy. Email: cristina.pignotti@univaq.it.}}
\title{Asymptotic consensus and flocking under decaying persistent excitation on rooted digraphs}
\date{\today}

\begin{document}

	\theoremstyle{plain} \newtheorem{thm}{Theorem}[section] \newtheorem{cor}[thm]{Corollary} \newtheorem{lem}[thm]{Lemma} \newtheorem{prop}[thm]{Proposition} \theoremstyle{definition} \newtheorem{defn}{Definition}[section] 
	
	\newtheorem{oss}[thm]{Remark}
	\newtheorem{ex}{Example}[section]
	\newtheorem{lemma}{Lemma}[section]

\maketitle
\begin{abstract}
In this paper, we investigate first- and second-order alignment models with non-universal interaction, time delays and possible communication failures, extending the results in \cite{CCP} to interaction digraphs that are only assumed to be rooted and to a weaker Persistence Excitation Condition. In particular, we allow the amount of interaction over time intervals of fixed length to decay polynomially in time.

For the first-order Hegselmann--Krause type model, we prove asymptotic convergence to consensus under a suitable condition relating the decay exponent of the communication weights to the maximal distance from the root. For the second-order Cucker--Smale model, we establish asymptotic flocking under an additional assumption on the decay of the influence function. These results show that collective behavior can still emerge under progressively weakening communication and without requiring strong connectivity of the interaction graph.
\end{abstract}
\section{Introduction}
\setcounter{equation}{0}

Multiagent systems have been deeply investigated in recent years, due to their
wide application to several scientific disciplines, among others biology
\cite{Cama}, economics \cite{Marsan}, robotics \cite{Bullo}, control
theory \cite{Borzi,PRT,WCB}, and social sciences
\cite{Bellomo, CF, Campi}. Among them, there is the
Hegselmann--Krause opinion formation model \cite{HK} and its second-order
version, the Cucker--Smale model, introduced in \cite{CS1} for the description
of flocking phenomena, such as the flocking of birds, the swarming of bacteria
and the schooling of fish (see also \cite{Ha1,Ha2}).  Typically, for the solutions of the
aforementioned multiagent systems, the convergence to consensus, in the case of
the Hegselmann--Krause model, and the exhibition of asymptotic flocking, in the
case of the Cucker--Smale model,  are investigated.

In multiagent systems, it is important to consider the presence of time delay
effects. Indeed, in the applications, one has to take into account certain time
lags due to the propagation of information among the agents or to reaction
times. The analysis of the Hegselmann--Krause model and the Cucker--Smale model
in the presence of time delays, which can be constant or, more realistically,
varying in time, has been carried out by many authors
\cite{CH,CL,CPP, DH, DHY,
	H, H3, HM, HaskCartabia, LW, Lu,P, PR,PT}.
Most of these results require an upper bound on the time delay size in order
to obtain convergence to consensus or flocking. However, asymptotic alignment
results without requiring any smallness condition on the time delay have been
obtained in \cite{H,Cartabia}. Generalizing and extending these
arguments, exponential convergence to consensus in the presence of
time-variable delays has been proved in
\cite{ContPi,Cont} for the first- and second-order
models, respectively, without assuming the time delay size to be small. 

It may happen that the agents involved in an opinion formation or flocking
process are not able to exchange information with all the other agents of the
system, namely each agent can influence or can be influenced only by some
agents. In this case, we are in the presence of a non-universal interaction.
To deal with this kind of interaction, a network topology over the structure
of the model has to be considered.

Another possible scenario is the one in which the system's agents sometimes
suspend the interactions they have with the agents they are linked to. As a
consequence, there is a temporary lack of connection between the system's
elements that may hinder convergence to consensus, for the first-order model,
or flocking, for the second-order one. Therefore, it is important to identify
suitable conditions on the communication weights guaranteeing the asymptotic
alignment of the system.

In \cite{Bonnet}, convergence to consensus and asymptotic flocking for a
class of Cucker--Smale systems under communication failures, namely with
interaction weights possibly degenerating among the agents, have been proved
under suitable assumptions in the case of symmetric interaction coefficients.
Convergence to consensus for a first-order alignment system involving
pair-dependent weights that can eventually degenerate has also been proved in
\cite{AnconaRossi} under a Persistence Excitation Condition. In the case of
nonsymmetric interaction coefficients, exponential convergence to consensus
for the Hegselmann--Krause model with time delay and possible communication
failures has been obtained in \cite{ContPi}. 

More recently, in \cite{CCP}, we considered first- and
second-order alignment models with pair- and time-dependent time delays,
non-universal interaction and possible communication failures. Under the
Persistence Excitation Condition
\begin{equation}\label{classicalPE}
	\int_t^{t+T}\alpha_{ij}(s)\,ds\geq {\tilde\alpha},
	\qquad t\geq0,
\end{equation}
for some $T,\tilde \alpha >0,$
for all the active edges of the interaction graph, exponential convergence to
consensus and exponential flocking were established whenever the digraph describing the interaction among the agents is strongly connected. The
results were obtained for general influence functions, without symmetry or
monotonicity assumptions, and without requiring any smallness conditions on the
time delay size.

{Multiagent systems with time delay and communication failures have been also analyzed in the recent works \cite{Cont2, JWL}.} For further recent results for multi-agent models with time-delay we quote \cite{ ChoiCicoPi, ChoiCP, Cico, COP, CicoPi, CHP, Posneg, Dong}. See also \cite{BCGR, BPDS} for other results in the nondelayed setting. 

The aim of the present paper is to weaken both the Persistence Excitation
Condition and the assumption on the network topology. More precisely, we
replace \eqref{classicalPE} with a generalized Persistence Excitation
Condition, allowing the amount of communication among connected
agents over intervals of fixed length to progressively decrease with time (see \eqref{PE}).
Thus, differently from the classical Persistence Excitation Condition, the
interaction is not required to remain uniformly persistent during the whole
evolution. At the same time, the rootedness assumption considerably weakens
the connectivity requirement: it is sufficient that there exists one agent,
the root, from which all the other agents can be reached through a directed
path.

For the first-order Hegselmann--Krause type model, we prove asymptotic
convergence to consensus under these weaker assumptions.
Therefore, consensus is still achieved even though the communication becomes
progressively weaker and the interaction graph is only rooted. Differently
from the uniform Persistence Excitation case considered in
\cite{CCP}, where exponential convergence was obtained,
the weaker persistent excitation condition leads to asymptotic convergence to
consensus.

We then extend the analysis to the second-order Cucker--Smale model. In this
case, besides velocity alignment, the uniform boundedness of the relative
positions has to be established. This requires a suitable relation between the
decay of the communication weights and the behavior of the influence function.
	This condition shows explicitly the interplay between the deterioration of the
	communication in time, the decay of the influence
	function,  and the topology of the communication network,
	through the maximal distance $\gamma_r$ from the root.
	
	The results obtained here show, in particular, that neither a uniform
	Persistence Excitation Condition nor strong connectivity of the interaction
	graph is necessary to guarantee the emergence of collective behavior. Both
	asymptotic consensus and asymptotic flocking may still occur when the
	communication among connected agents progressively deteriorates and the
	underlying digraph is only rooted, provided that the deterioration is
	sufficiently slow with respect to the network topology and, in the
	second-order case, to the decay of the influence function. As in
	\cite{CCP}, no smallness assumption on the time delay
	size is required.
	
	The rest of the paper is organized as follows. In Section~\ref{2}, we give some preliminary definitions and remarks. In Section~\ref{timedelay}, we analyze the
	first-order Hegselmann--Krause model and establish asymptotic convergence to
	consensus under the generalized Persistence Excitation Condition. In
	Section~\ref{4}, we consider the second-order Cucker--Smale model and
	prove asymptotic flocking under the additional assumption on the influence function.

\section{Preliminary notions}\label{2}
\setcounter{equation}{0}
  In this section, we collect some definitions and notions that will be used in the paper. We start by recalling some notions from graph theory.
\begin{defn}
	Let $ \mathcal{G}=(\mathcal{V}, \mathcal{E})$ be a direct graph consisting of a finite set $\mathcal{V}$ of vertices and a set $\mathcal{E} \subset \mathcal{V} \times \mathcal{V}$ of arcs. The adjacency matrix of $\mathcal{G}$ is the $N\times N$ matrix $(\chi_{ij})_{i,j\in\mathcal{V}}$,
	$$\chi_{ij}=\begin{cases}
		1,\quad (j,i)\in \mathcal{E},\\
		0,\quad (j,i)\notin \mathcal{E},
	\end{cases}\quad \forall i,j\in \mathcal{V}.$$
	For each vertex $i\in \mathcal{V}$, the in-neighborhood of $i$ is \begin{equation}\label{N_i}
		\mathcal{N}_i:=\{j\in \mathcal{V}:(j,i)\in \mathcal{E}\}=\{j= 1,\dots,N:\chi_{ij}=1\},
	\end{equation} 
	and the in-degree of $i$ is \begin{equation}\label{cardN_i}
		N_i:=|\mathcal{N}_i|.
	\end{equation}
	A path in $\mathcal{G}$ from $i_0\in \mathcal{V}$ to $i_p\in\mathcal{V}$ is a finite sequence $i_0, i_1, \dots, i_p$ of distinct vertices $i_k\in \mathcal{V}$, $k=0,\dots,p$ such that $(i_k,i_{k+1})\in \mathcal{E}$, for all $k=0,\dots,p-1$. The integer p is called length of the path. 
	
	A vertex $j\in \mathcal{V}$ is said to be reachable from a vertex $i\in \mathcal{V}$ if there exists a path from $i$ to $j$ in the graph $\mathcal{G}$. If a vertex $j\in \mathcal{V}$ is reachable from a vertex $i\in \mathcal{V}$, the distance from $i$ to $j$, $\text{dist}(i,j)$, is the length of the shortest path from $i$ to $j$. 
\end{defn}
\begin{oss}
	If the graph $\mathcal{G}$ has no self-loops, namely $(i,i)\notin \mathcal{E}$, for all $i=1,\dots,N$, then $i \notin \mathcal{N}_i$ and $0\leq N_i\leq N-1$, for all $1 \leq i \leq N$. 
\end{oss}
\begin{defn}[Strongly connected graph]
	A direct graph $\mathcal{G}=(\mathcal{V},\mathcal{E})$ is said to be strongly connected if, for any $i,j\in\mathcal{V}$ with $i\neq j$, the vertex $j$ is reachable from vertex $i$. In this case, we define the \emph{depth} of the direct graph $\mathcal{G}$ as
	\begin{equation}\label{depth}
		\gamma:=\max_{i,j=1,\dots,N} \text{dist}(i,j).
	\end{equation}
\end{defn}
\begin{defn}[Rooted graph]\label{rooted}
	A direct graph $\mathcal{G}=(\mathcal{V},\mathcal{E})$ is said to be rooted if there exists a vertex $r\in \mathcal{V}$ such that, for all $i\in \mathcal{V}$ with $i\neq r$, the vertex $i$ is reachable from vertex $r$. The distance from the root is 
	\begin{equation}\label{gamma}
		\gamma_r:=\max_{i =1,\dots, N} \text{dist}(r,i). 
	\end{equation}
\end{defn}
\begin{ex}
	Every strongly connected direct graph $\mathcal{G}$ is rooted. Other examples of rooted graphs are spanning trees. 
\end{ex}
\begin{oss}
	If a direct graph $\mathcal{G}$ is rooted, then the distance from the root satisfies $1\leq \gamma_r\leq N-1$.
\end{oss}

\vspace{0.3cm}

Now, we introduce a generalization of the classical Persistence Excitation Condition (cfr. \cite{Bonnet,AnconaRossi,CCP}). 
\begin{defn}[Generalized Persistence Excitation Condition]\label{GPE}
	Let $\{\alpha_{i,j}\}_{i,j=1,\dots,N}$, $N\in \mathbb{N}$, be a family of $\mathcal{L}^\infty([0,+\infty);[0,1])$ functions. We say that the family $\{\alpha_{i,j}\}_{i,j=1,\dots,N}$ satisfies a generalized Persistence Excitation Condition if there exist an exponent $\beta\geq 0$ and positive constants $T$ and $\tilde{\alpha}$ such that, for all $i,j=1,\dots,N$ such that $\chi_{ij}=1,$
	\begin{equation}\label{PE}
		\int_{t}^{t+T}\alpha_{ij}(s)ds\geq \frac{\tilde{\alpha}}{(1+t)^\beta},\quad \forall t\geq 0.
	\end{equation}
\end{defn}
\begin{oss}
	Let us note that, if the exponent $\beta$ in \eqref{PE} is 0, then Definition \ref{GPE} reduces to the definition of the classical Persistence Excitation Condition \eqref{classicalPE}.
\end{oss}
\begin{oss}
	Note that if the functions $\alpha_{ij}$ satisfy $\alpha_{ij}(t)=1$, for all $t\geq 0$, then \eqref{PE} is of course satisfied for $\beta=0$ and for any $T$ and $\tilde{\alpha}$ with $T\geq \tilde{\alpha}$.
\end{oss}

	\section{The first-order alignment model}\label{timedelay}
	\setcounter{equation}{0}
	Consider a finite set of $N\in \mathbb{N}$ interacting agents, $N\geq 2 $. The notation $x_{i}(t)\in \mathbb{R}^d$ will denote the position of the $i$-th agent at time $t$. One may think of $x_{i}(t)$ as a more abstract quantity: for instance, in the context of social sciences, $x_{i}(t)$ could be interpreted as the opinion of the $i$-th agent at time $t$. In the sequel, we shall denote with $\lvert\cdot \rvert$ and $\langle\cdot,\cdot\rangle$ the usual norm and scalar product on $\mathbb{R}^{d}$, respectively. Moreover, we set $\mathbb{N}_0:=\mathbb{N}\cup\{0\}.$ 
	
	\vspace{0.1cm}
	
	We assume that the agents interact among themselves according the following Hegselmann-Krause type law:
	\begin{equation}\label{onoff}
		\frac{d}{dt}x_{i}(t)=\underset{j:j\neq i}{\sum}\chi_{ij} b_{ij}(t)(x_{j}(t-\tau_{ij}(t))-x_{i}(t)),\quad t>0,	\,\, \forall i=1,\dots,N.
	\end{equation}
	
	Here, $\tau_{ij}:[0,+\infty)\rightarrow[0,+\infty)$ are the time delay functions. On the time delay functions we require the following classical assumption: for all $i,j=1,\dots,N$, $\tau_{ij}(\cdot)$ is continuous and bounded. Namely, there is a positive constant $\tau>0$ such that
	\begin{equation}\label{taubounded}
		0\leq \tau_{ij}(t)\leq \tau,\quad \forall t\geq 0,\,\forall i,j=1,\dots,N.
	\end{equation}
	 
    Due to the presence of the time delays, we prescribe initial conditions that are functions defined in the interval $[-\tau, 0]$, 
    \begin{equation}\label{incond}
    	x_{i}(s)=x^{0}_{i}(s),\quad \forall s\in [-\tau,0],\,\forall i=1,\dots,N.
    \end{equation}
    For every $i=1,\dots,N$, $x_i^0$ is taken in the space $C([-\tau,0];\mathbb{R}^d)$ of continuous functions from $[-\tau,0]$ to $\mathbb{R}^d$. 	
    
    \vspace{0.1cm}
    
	In \eqref{onoff}, the terms $b_{ij}$ are defined as follows,
	\begin{equation}\label{weight}
		b_{ij}(t):=\frac{1}{N-1}{\alpha_{ij}(t)}\psi( x_{i}(t), x_{j}(t-\tau_{ij}(t))), \quad t>0,\, \forall i,j=1,\dots,N.
	\end{equation}
	The function $\psi:\mathbb{R}^d\times\mathbb{R}^d\rightarrow \mathbb{R}$ in \eqref{weight} is called influence function and describes the interaction among the agents. Classically, the influence function is a function depending on the distance between agents' positions (or opinions), namely
	\begin{equation}\label{psi}
		\psi(x,y):=\tilde\psi(\lvert x-y\rvert),\quad \forall x,y\in\mathbb{R}^d,
	\end{equation}
	with $\tilde\psi:[0,+\infty)\rightarrow\mathbb{R}$  continuous, positive, and nonincreasing. The nonincreasing property of $\psi$ is actually a reasonable assumption since, in many physical models, as the distance between two particles grows, the interaction between them becomes weaker. However, in this section, the influence function is of more general type, namely $\psi$ is a generic function of agents' positions. Also, we weaken the monotonicity condition on $\psi$ by dealing with only bounded influence functions. Indeed, even if $\psi$ is nonmonotone, it suffices just to take a monotonization of $\psi$ in the proofs. In the sequel, we will denote
	\begin{equation}\label{K}
		K:=\lVert \psi\rVert_{\infty}.
	\end{equation} 
	
	The weights $\alpha_{ij}:[0,+\infty)\rightarrow [0,1]$ in \eqref{weight} describe possible communication failures among the agents since they are allowed to assume the value 0 at certain times, in which case the agents interrupt their exchange of information. In case of communication failures, consensus formation is not guaranteed in general, unless one requires some conditions on the weights ensuring that the interaction is not too weak. Most of the consensus results for multiagent systems with lack of interaction have been obtained under the Persistence Excitation Condition (PE) (cfr. \cite{AnconaRossi, Bonnet, CCP}). Here, we establish consensus formation under the Generalized Persistence Excitation Condition (see Definition \ref{GPE}). 
	
	\vspace{0.1cm}
	
	Finally, the terms $\chi_{ij}$ in \eqref{onoff} are defined as follows
    \begin{equation}\label{chiij}
    	{\chi_{ij}=}\begin{cases}
    		1,\quad \text{if } j \text{ transmits information to } i,\\
    		0,\quad \text{otherwise}.
    	\end{cases}
    \end{equation}
    When $\chi_{ij}=1$, for all $i,j=1,\dots,N$, each pair of agents can exchange information and the interaction is universal. On the other hand, if $\chi_{ij}=0$, for some $i,j=1,\dots,N$, the interaction is non-universal and there is at least a pair of agents $i$ and $j$ such that agent $j$ cannot influence agent $i$ at any times. 
    
    In order to deal with the non-universal interaction, we locate the agents at the vertices of a direct graph $\mathcal{G}=(\mathcal{V},\mathcal{E})$, $\mathcal{V}=\{1,\dots,N\}$, without self-loops and whose adjacency matrix is $\{\chi_{ij}\}_{i,j=1,\dots,N}$. In this paper, we show that a sufficient condition for consensus formation for system \eqref{onoff} is that the direct graph $\mathcal{G}$ is rooted (see Definition \ref{rooted}).

	\vspace{0.1cm}

Existence of solutions to \eqref{onoff} can be obtained using classical arguments for delay differential equations (cf. \cite{HL, Halanay}). Here, we will analyze the asymptotic behavior of solutions to \eqref{onoff}. In particular, we will establish asymptotic consensus (see Definition \ref{cons} below) for system \eqref{onoff}.

\begin{defn}[Consensus]\label{cons}
	Let $\{x_{i}\}_{i=1,\dots,N}$ be a solution to \eqref{onoff}. We define the diameter $d(\cdot)$ of the solution as follows,
	\begin{equation}\label{diamhk}
		d(t):=\max_{i,j=1,\dots,N}\lvert x_{i}(t)-x_{j}(t)\rvert,\quad \forall t\geq -\tau.
	\end{equation} 
	We say that the solution $\{x_{i}\}_{i=1,\dots,N}$ to \eqref{onoff} converges to consensus if $d(t)\to0$, as $t\to \infty$.
\end{defn}
Our main results is the following. 

\begin{thm}\label{consres}
	Assume that the direct graph $\mathcal{G}$ is rooted (see Definition \ref{rooted}). Assume \eqref{taubounded}. Let $\psi:\mathbb{R}^d\times\mathbb{R}^d\rightarrow\mathbb{R}$ be positive, bounded, and continuous. Assume the weights $\{\alpha_{ij}\}_{i,j=1,\dots,N}$ belong to $\mathcal{L}^\infty([0,+\infty);[0,1])$ and satisfy \eqref{PE} with exponent $0\leq \beta\leq \frac{1}{\gamma_{r}}$, where $\gamma_{r}$ is the distance from the root defined in \eqref{gamma}. Let $x_i^0$ be continuous in $[-\tau,0]$, for all $i=1,\dots,N$. Then, every solution $\{x_i\}_{i=1,\dots,N}$ to \eqref{onoff}, \eqref{incond} converges to consensus in the sense of Definition \ref{cons}.
\end{thm}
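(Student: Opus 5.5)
The plan follows the strategy of \cite{CCP}, adapted to rooted digraphs and to the decaying weights. First I would show that every convex combination stays bounded and that the delayed-convex-hull diameter is nonincreasing. Fix a vector $v\in\mathbb{R}^d$ and set
$$M_n:=\max_{i}\max_{s\in[t_n-\tau,t_n]}\langle x_i(s),v\rangle ,\qquad m_n:=\min_{i}\min_{s\in[t_n-\tau,t_n]}\langle x_i(s),v\rangle ,$$
where $t_n$ is a sequence of times to be chosen. Since each $\frac{d}{dt}x_i$ is a nonnegative combination of the differences $x_j(t-\tau_{ij}(t))-x_i(t)$, with total weight at most $K$, the usual barrier argument shows that $\langle x_i(t),v\rangle$ stays in $[m_n,M_n]$ for $t\ge t_n$. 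Hence $M_n$ is nonincreasing, $m_n$ is nondecreasing, and
$$D_n:=\max_{i,j}\max_{s,\sigma\in[t_n-\tau,t_n]}|x_i(s)-x_j(\sigma)|$$
is nonincreasing and bounded by $D_0$.

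The core step is a contraction estimate on each window $[t_n,t_{n+1}]$ of the form
$$D_{n+1}\le (1-c\,\varepsilon_n)\,D_n,\qquad \varepsilon_n\approx \frac{C}{(1+t_n)^{\beta\gamma_r}} .$$
I would use the root $r$ and a breadth-first spanning tree, with layers $L_k=\{i:\mathrm{dist}(r,i)=k\}$, $k=0,\dots,\gamma_r$. Fix $v$ and split into two cases: either $\langle x_r(t_n),v\rangle\le (M_n+m_n)/2$, or the symmetric case.

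\begin{itemize}
\item \textbf{Root estimate.} The root has no guaranteed inputs, but it still satisfies $\frac{d}{dt}\langle x_r,v\rangle\le K(M_n-\langle x_r,v\rangle)$. Therefore $M_n-\langle x_r(t),v\rangle\ge e^{-K(t-t_n)}(M_n-m_n)/2$ on the window, so the root stays a fixed fraction away from $M_n$.
\item \textbf{Induction along layers.} For $i\in L_{k+1}$ with parent $j\in L_k$, suppose $M_n-\langle x_j(s),v\rangle\ge \delta_k(M_n-m_n)$ on a subinterval. By \eqref{PE}, on an interval of length $T$ starting at $t\approx t_n+kT+\tau$ we have $\int\alpha_{ij}\ge \tilde\alpha(1+t)^{-\beta}$. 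Writing $\frac{d}{dt}(M_n-\langle x_i,v\rangle)\ge -\frac{K}{N-1}\sum(\dots)+\frac{\alpha_{ij}\psi_{\min}}{N-1}\,(M_n-\langle x_j(\cdot-\tau_{ij}),v\rangle)$, Gronwall gives
$$\delta_{k+1}\ge c\,\delta_k\,\tilde\alpha\,(1+t_n)^{-\beta}.$$
Here $\psi_{\min}>0$ is the minimum of $\psi$ on the compact region containing the trajectory, which is available by the boundedness shown in the first step.
\end{itemize}

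After $\gamma_r$ layers, and choosing $t_{n+1}-t_n=(\gamma_r+1)(T+\tau)+\tau$ fixed, every agent satisfies $M_n-\langle x_i,v\rangle\ge c\,(1+t_n)^{-\beta\gamma_r}(M_n-m_n)$ on $[t_{n+1}-\tau,t_{n+1}]$. This yields the claimed contraction. The maximum over $v$ is handled as in \cite{CCP} by taking $v$ along the direction realizing $D_{n+1}$, which gives $D_{n+1}\le(1-c\varepsilon_n)D_n$.

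Iterating the contraction gives
$$D_n\le D_0\prod_{k<n}\bigl(1-c\,(1+kh)^{-\beta\gamma_r}\bigr)\le D_0\exp\Bigl(-c\sum_{k<n}(1+kh)^{-\beta\gamma_r}\Bigr).$$
The series diverges precisely because $\beta\gamma_r\le1$, so $D_n\to0$. Since $d(t)\le D_n$ for $t\ge t_n$ and $D_n$ is monotone, we conclude $d(t)\to0$.

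The main obstacle is the layer induction. It requires ensuring that the delays and the possible zero set of $\alpha_{ij}$ do not destroy the lower bound inherited from the parent. I would first try, as in \cite{CCP}, to propagate a lower bound valid on the whole remaining window, using exponential decay $e^{-K(t-s)}$ after each activation interval. The key point is that the constants lost at each layer must be uniform in $n$, with only the factor $(1+t_n)^{-\beta}$ depending on time. Because the window length is fixed, $(1+t)^{-\beta}$ on the window is comparable to $(1+t_n)^{-\beta}$, so this uniformity should hold.
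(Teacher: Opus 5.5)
Your proposal is correct and follows essentially the same route as the paper. Both arguments use convex-hull barrier bounds on windows of fixed length, a positive lower bound on $\psi$ coming from the uniform bound on trajectories, and propagation of a gap from the root along shortest paths via Gronwall and \eqref{PE}, losing a factor comparable to $(1+t_n)^{-\beta}$ per layer. They then obtain the contraction $D_{n+1}\le(1-\Gamma_n)D_n$ by choosing $v$ in the direction realizing $D_{n+1}$, and conclude from the divergence of $\sum_k k^{-\beta\gamma_r}$ for $\beta\gamma_r\le 1$. The only differences are cosmetic. Instead of your midpoint case split, the paper proves the upper and lower bounds simultaneously relative to $\langle x_r(t_n),v\rangle$, so that adding them gives $(1-\Gamma_n)(M_n^v-m_n^v)$ without the factor $1/2$. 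The paper also uses the tighter window length $\gamma_r(T+\tau)+\tau$ in place of your $(\gamma_r+1)(T+\tau)+\tau$.
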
	
\begin{oss}
	Note that, without loss of generality, we can assume that the positive constant in \eqref{PE} satisfies $\tilde{\alpha}\leq \frac{1}{K}$. Moreover, whereas the exponent $\beta$ in the generalized Persistence Excitation Condition \eqref{PE} is required to satisfy the constraint $0\leq \beta \leq \frac{1}{\gamma_{r}}$, no restrictions are required on the positive constant $T$. 
\end{oss}
	\subsection{Proof of the consensus result}
	 Let us consider a solution $\{x_{i}\}_{i=1,\dots,N}$ to \eqref{onoff}, \eqref{incond}. Throughout this subsection, we assume the validity of the assumptions of Theorem \ref{consres}. To prove the consensus result Theorem \ref{consres}, we first derive some preliminary estimates. We start by introducing some quantities.
    
		\begin{defn}\label{quantonoff}
		Given a vector $v\in \mathbb{R}^d$, for all $n\in \mathbb{N}_0$ we define
		\begin{equation}\label{In}
			I_n:=[n(\gamma_r (T+\tau)+\tau)-\tau,n(\gamma_r (T+\tau)+\tau)],
		\end{equation}
		\begin{equation}\label{m_n}
			m_n^v:=\min_{i=1,\dots,N}\min_{s\in I_n}\,\langle x_{i}(s),v\rangle,
		\end{equation}
		\begin{equation}\label{M_n}
			M_n^v:=\max_{j=1,\dots,N}\max_{s\in I_n}\,\langle x_{j}(s),v\rangle.
		\end{equation}
	\end{defn} 
    \begin{defn}[Generalized diameters]\label{Dn}
    	We define the generalized initial diameter
    	\begin{equation}\label{D0}
    		D_0:=\max_{i,j =1,\dots, N}\max_{\sigma,s\in I_0}\lvert x_i(\sigma)-x_j(s)\rvert=\max_{i,j =1,\dots, N}\max_{\sigma,s\in [-\tau,0]}\lvert x_i(\sigma)-x_j(s)\rvert.
    	\end{equation}
    	For all $n\in \mathbb{N}_0$, we define the $n$-generalized diameter
    	\begin{equation}\label{D_n}
    		D_n:=\max_{i,j =1,\dots, N}\max_{\sigma,s\in I_n}\lvert x_i(\sigma)-x_j(s)\rvert.
    	\end{equation}
    \end{defn}
      We now state the preliminary results. These results can be proven with analogous arguments to the ones used in \cite{CCP}, therefore we omit their proofs. 
    
	\begin{lem}\label{L1}{(See \cite{CCP}, {Lemma 2.3 and Lemma 2.4})}
		For each $n\in \mathbb{N}_0$ and for any vector $v\in \RR^{d}$, we have 
		\begin{equation}\label{scalpronoff}
			m_n^v\leq \langle x_{i}(t),v\rangle \leq M_n^v,
		\end{equation}for all  $t\geq n(\gamma_r(T+\tau)+\tau)-\tau$ and for any $i=1,\dots,N$, where $m_n^v$ and $M_n^v$ are defined in \eqref{m_n} and \eqref{M_n}, respectively, $\gamma_r$ is the distance from the root defined in \eqref{gamma} and $T$ is the positive constant in \eqref{PE}.
	\end{lem}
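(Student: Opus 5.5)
Fix $n\in\mathbb{N}_0$ and $v\in\mathbb{R}^d$, and write $t_n:=n(\gamma_r(T+\tau)+\tau)$, so that $I_n=[t_n-\tau,t_n]$. On $I_n$ the bounds \eqref{scalpronoff} hold by the definitions \eqref{m_n} and \eqref{M_n}. It therefore suffices to prove them for $t> t_n$, and we argue by contradiction in the style of an invariance (maximum principle) argument. We prove the upper bound; the lower bound follows by applying it to $-v$, since $m_n^{v}=-M_n^{-v}$.

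Fix $\epsilon>0$ and set
\[
K^\epsilon:=\{t>t_n:\ \langle x_i(s),v\rangle< M_n^v+\epsilon\ \ \forall s\in(t_n,t),\ \forall i=1,\dots,N\}.
\]
By continuity of the solution at $t_n$, $K^\epsilon$ is nonempty. Let $S^\epsilon:=\sup K^\epsilon$ and suppose $S^\epsilon<+\infty$. Then some index $i$ satisfies $\langle x_i(S^\epsilon),v\rangle=M_n^v+\epsilon$. For $t\in(t_n,S^\epsilon)$, each delayed argument satisfies $t-\tau_{ij}(t)\ge t_n-\tau$. Hence $\langle x_j(t-\tau_{ij}(t)),v\rangle$ is either at most $M_n^v$, when the delayed time falls in $I_n$, or less than $M_n^v+\epsilon$, when it falls in $(t_n,S^\epsilon)$.

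Differentiating along the dynamics \eqref{onoff} gives
\[
\frac{d}{dt}\langle x_i(t),v\rangle=\sum_{j\neq i}\chi_{ij}b_{ij}(t)\big(\langle x_j(t-\tau_{ij}(t)),v\rangle-\langle x_i(t),v\rangle\big)\le \frac{K}{N-1}\sum_{j}\chi_{ij}\big(M_n^v+\epsilon-\langle x_i(t),v\rangle\big).
\]
Here we used that $b_{ij}\ge 0$ (because $\psi>0$ and $\alpha_{ij}\ge 0$) and that $b_{ij}\le K/(N-1)$. This step needs care: when the bracket is negative, bounding $b_{ij}$ above by $K/(N-1)$ is not valid directly. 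We handle it by writing the bracket as $(\langle x_j,v\rangle-M_n^v-\epsilon)+(M_n^v+\epsilon-\langle x_i,v\rangle)$. The first term is nonpositive, so we drop it. The second term is nonnegative, so we bound its coefficient using $b_{ij}\le K/(N-1)$ and $\sum_j\chi_{ij}\le N-1$. This yields
\[
\frac{d}{dt}\langle x_i(t),v\rangle\le K\big(M_n^v+\epsilon-\langle x_i(t),v\rangle\big).
\]
Gronwall's inequality on $(t_n,S^\epsilon)$ then gives
\[
\langle x_i(t),v\rangle\le M_n^v+\epsilon-\big(M_n^v+\epsilon-\langle x_i(t_n),v\rangle\big)e^{-K(t-t_n)}\le M_n^v+\epsilon-\epsilon e^{-K(S^\epsilon-t_n)}.
\]
Letting $t\to S^\epsilon$ contradicts $\langle x_i(S^\epsilon),v\rangle=M_n^v+\epsilon$. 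Therefore $S^\epsilon=+\infty$ for every $\epsilon>0$, and sending $\epsilon\to0$ gives the upper bound for all $t\ge t_n-\tau$.

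The main obstacle is the delay bookkeeping: we must check that every delayed argument $t-\tau_{ij}(t)$ stays in $[t_n-\tau,t)$, so that the inductive bound covers it. This holds precisely because $I_n$ has length $\tau$ and the delays satisfy \eqref{taubounded}. The rooted structure and the condition \eqref{PE} play no role in this lemma. The particular spacing $\gamma_r(T+\tau)+\tau$ is irrelevant here; it is only used later, when the intervals $I_n$ are used to shrink $D_n$.
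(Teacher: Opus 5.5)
Your argument is correct and is essentially the one the paper defers to in \cite{CCP}: the $\epsilon$-barrier set $K^\epsilon$, a contradiction at $S^\epsilon=\sup K^\epsilon$ obtained from Gronwall's inequality and the strict gap $\epsilon e^{-K(S^\epsilon-t_n)}$, and the lower bound by symmetry via $m_n^v=-M_n^{-v}$. Splitting the bracket so that $b_{ij}\le K/(N-1)$ is applied only to the nonnegative term is a clean justification of a step that is often glossed over; note only that the differential inequality holds for a.e.\ $t$, since $\alpha_{ij}$ is merely $\mathcal{L}^\infty$, so Gronwall is applied to an absolutely continuous function.
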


	\begin{lem}\label{lemmadiamonoff}{(See \cite{CCP}, {Lemma 2.5 and Remark 2.6})}
		For each $n\in \mathbb{N}_0$, we have 
		\begin{equation}\label{diamonoff}
			\lvert x_i(s)-x_j(t)\rvert\leq D_n,
		\end{equation}
		for all  $s,t\geq n(\gamma_r (T+\tau)+\tau)-\tau$ and for any $i,j=1,\dots,N$, where $D_n$ is the $n$-generalized diameter defined in \eqref{D_n}, $\gamma_r$ is the distance from the root defined in \eqref{gamma} and $T$ is the positive constant in \eqref{PE}.
		
		In particular, for all $n\in \mathbb{N}_0$, 	
		\begin{equation}\label{Dndec}
			D_{n+1}\leq D_n,
		\end{equation}
		and
	\begin{equation}\label{diamonoff2}
		d(t)\leq D_n,\quad \forall t\geq n(\gamma_r (T+\tau)+\tau)-\tau,
	\end{equation}
	where $d(\cdot)$ is the diameter of the solution defined in \eqref{diamhk}
	\end{lem}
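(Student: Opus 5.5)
The plan is to derive the statement directly from Lemma \ref{L1} by projecting onto a suitably chosen unit direction. Throughout, write $t_n:=n(\gamma_r(T+\tau)+\tau)$, so that $I_n=[t_n-\tau,t_n]$.

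\textbf{Main estimate.} Fix $n\in\mathbb{N}_0$, indices $i,j$, and times $s,t\geq t_n-\tau$. If $x_i(s)=x_j(t)$, there is nothing to prove. Otherwise, take the unit vector
$$v:=\frac{x_i(s)-x_j(t)}{\lvert x_i(s)-x_j(t)\rvert}.$$
Then
$$\lvert x_i(s)-x_j(t)\rvert=\langle x_i(s),v\rangle-\langle x_j(t),v\rangle .$$
Lemma \ref{L1}, applied at time $s$ for agent $i$ and at time $t$ for agent $j$ with this same $v$, bounds the right-hand side by $M_n^v-m_n^v$.

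\textbf{Bounding $M_n^v-m_n^v$ by $D_n$.} Each $x_k$ is continuous and $I_n$ is compact, so the extrema in \eqref{m_n} and \eqref{M_n} are attained. Hence there are indices $k,l$ and times $\sigma,\sigma'\in I_n$ with
$$M_n^v=\langle x_k(\sigma),v\rangle,\qquad m_n^v=\langle x_l(\sigma'),v\rangle .$$
By Cauchy--Schwarz and $\lvert v\rvert=1$,
$$M_n^v-m_n^v=\langle x_k(\sigma)-x_l(\sigma'),v\rangle\leq \lvert x_k(\sigma)-x_l(\sigma')\rvert\leq D_n .$$
This proves \eqref{diamonoff}.

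\textbf{Consequences.} For \eqref{Dndec}, note that $t_{n+1}-\tau\geq t_n-\tau$, so $I_{n+1}\subset[t_n-\tau,+\infty)$. Applying \eqref{diamonoff} to all pairs of times $\sigma,s\in I_{n+1}$ and taking the maximum gives $D_{n+1}\leq D_n$. For \eqref{diamonoff2}, take $s=t$ in \eqref{diamonoff} and maximize over $i,j$.

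The only substantive input is Lemma \ref{L1}, which is taken as given. That lemma encodes the invariance of half-spaces $\{\langle x,v\rangle\geq m\}$ under the dynamics \eqref{onoff}, a consequence of the nonnegativity of the weights $b_{ij}$. The one point to be careful about is that the same direction $v$ must be used for both agents, and $v$ depends on $s,t$. This is allowed because Lemma \ref{L1} holds for every $v$. I therefore expect no real obstacle beyond correctly invoking Lemma \ref{L1}.
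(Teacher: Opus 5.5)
Your proof is correct and is essentially the argument the paper relies on: the paper omits the proof and cites \cite{CCP}, and it uses the same device itself when passing from \eqref{D_nonoff} to \eqref{deconoff}. That device is projecting onto the unit vector $v=(x_i(s)-x_j(t))/\lvert x_i(s)-x_j(t)\rvert$ and applying Lemma \ref{L1} to get $\lvert x_i(s)-x_j(t)\rvert\leq M_n^v-m_n^v\leq D_n$. Your derivations of \eqref{Dndec} from $I_{n+1}\subset[n(\gamma_r(T+\tau)+\tau)-\tau,+\infty)$ and of \eqref{diamonoff2} by taking $s=t$ are likewise the intended ones.
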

Also, we are able to show that the trajectories are bounded, uniformly with respect to $t$ and $i=1,\dots,N$. This allows us to derive a positive bound from below on the communication rates, which is crucial in order to establish the asymptotic consensus.
	\begin{lem}\label{L3onoff}{(See \cite{CCP}, {Lemma 2.7})}
		We have \begin{equation}\label{boundsolonoff}
			\lvert x_{i}(t)\rvert\leq C_{0}:=\max_{j=1,\dots,N}\,\,\max_{s\in [-\tau, 0]}\lvert x_{j}(s)\rvert,
		\end{equation}
        for all $t\geq -\tau$ and $i=1,\dots,N$. 
        
        In particular,        \begin{equation}\label{stima_psionoff}
			\psi (x_i(t), x_j(t-\tau_{ij}(t)))\ge \psi_{0}:=\min_{\vert y\vert, \vert z\vert \le  C_{0}}\psi(y,z)>0,
		\end{equation}
		for all $t\ge 0$ and $i,j=1,\dots, N$.
	\end{lem}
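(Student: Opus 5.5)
The idea is to reduce the uniform bound to the one-dimensional comparison principle of Lemma \ref{L1}, used with $n=0$. By \eqref{In}, $I_0=[-\tau,0]$, so the quantities $m_0^v$ and $M_0^v$ involve only the initial data.

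\emph{Uniform bound.} For $t\in[-\tau,0]$, \eqref{boundsolonoff} holds by the definition of $C_0$. Fix $t>0$ and $i$. If $x_i(t)=0$ there is nothing to prove. Otherwise, take the unit vector $v:=x_i(t)/\lvert x_i(t)\rvert$. Since $t\geq -\tau=0\cdot(\gamma_r(T+\tau)+\tau)-\tau$, Lemma \ref{L1} gives
\begin{equation*}
\lvert x_i(t)\rvert=\langle x_i(t),v\rangle\leq M_0^v=\max_{j=1,\dots,N}\max_{s\in[-\tau,0]}\langle x_j(s),v\rangle .
\end{equation*}
By the Cauchy--Schwarz inequality and $\lvert v\rvert=1$, each term satisfies $\langle x_j(s),v\rangle\leq \lvert x_j(s)\rvert\leq C_0$. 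Hence $\lvert x_i(t)\rvert\leq C_0$. The choice of $v$ depends on $t$ and $i$, but this is harmless because Lemma \ref{L1} holds for every $v\in\mathbb{R}^d$.

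\emph{Lower bound on $\psi$.} For $t\geq0$, the delay bound \eqref{taubounded} gives $t-\tau_{ij}(t)\in[-\tau,\infty)$. By the first part, both $x_i(t)$ and $x_j(t-\tau_{ij}(t))$ therefore lie in the closed ball $\{\lvert y\rvert\leq C_0\}$. The set $\{(y,z):\lvert y\rvert,\lvert z\rvert\leq C_0\}$ is compact, and $\psi$ is continuous and positive. So the minimum $\psi_0$ is attained at some point of this set and is strictly positive, which gives \eqref{stima_psionoff}.

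There is no real obstacle once Lemma \ref{L1} is available. The only points needing care are the degenerate case $x_i(t)=0$ and checking that the delayed argument stays in $[-\tau,\infty)$, where the bound applies. All the dynamical content (the invariance of $[m_0^v,M_0^v]$ under \eqref{onoff}, using $b_{ij}\geq0$) is contained in Lemma \ref{L1}.
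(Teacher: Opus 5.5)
Your proposal is correct and is essentially the argument the paper relies on: it omits the proof and refers to Lemma 2.7 of \cite{CCP}. That argument applies Lemma \ref{L1} with $n=0$ and $v=x_i(t)/\lvert x_i(t)\rvert$ to get $\lvert x_i(t)\rvert\le M_0^v\le C_0$. It then uses $t-\tau_{ij}(t)\ge-\tau$ together with the continuity and positivity of $\psi$ on the compact set $\{\lvert y\rvert,\lvert z\rvert\le C_0\}$ to obtain $\psi_0>0$.
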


Finally, before we move to the proof of the consensus result Theorem \ref{consres}, we establish the following crucial result, inspired by arguments in \cite{CCP,H3}.
	\begin{prop}\label{lemma3onoff}
		For all $n\in \mathbb{N}_0$ and $v\in \mathbb{R}^d$, it holds
		\begin{equation} \label{Bonoff}
			m_{n}^v+\Gamma_n(\langle x_r(n(\gamma_r(T+\tau)+\tau)),v \rangle-m^v_{n}) \leq \langle x_i(t),v\rangle\leq M^v_{n}-\Gamma_n(M^v_{n}-\langle x_r(n(\gamma_r(T+\tau)+\tau)),v \rangle), 
		\end{equation}
		for all $t\in I_{n+1}$ and $i =1,\dots, N$, where $I_{n+1}$ is the time interval defined in \eqref{In} and		\begin{equation}\label{Gammaonoff}
			\Gamma_n:=e^{-K (\frac{1}{2}(\gamma_{r}^2+3\gamma_r)(T+\tau)+\tau)}\left(\frac{\psi_{0}\tilde{\alpha}}{(N-1)(1+n(\gamma_r(T+\tau)+\tau)+(\gamma_r-1)T+\gamma_r\tau)^\beta}\right)^{\gamma_r},
		\end{equation}
		being $m_n^v$ and $M_n^v$ the quantities defined in \eqref{m_n} and \eqref{M_n}, respectively, $\gamma_r$ the distance from the root defined in \eqref{gamma}, $\beta$, $T$ and $\tilde{\alpha}$ the constants in \eqref{PE}.
	\end{prop}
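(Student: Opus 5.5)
Set $t_n:=n(\gamma_r(T+\tau)+\tau)$. We prove only the lower bound in \eqref{Bonoff}; the upper bound follows by applying it to $-v$, since $m^{-v}_n=-M^v_n$. The argument is an induction along the layers of the graph ordered by distance from the root. For $k=0,\dots,\gamma_r$ put
\[
\mathcal{V}_k:=\{i:\ \text{dist}(r,i)=k\},\qquad \mathcal{V}_0=\{r\},\qquad s_k:=t_n+k(T+\tau).
\]
We show that for every $i\in\mathcal{V}_k$ and every $t\in[s_k,\,t_{n+1}]$,
\[
\langle x_i(t),v\rangle-m^v_n\ \ge\ c_k\,\bigl(\langle x_r(t_n),v\rangle-m^v_n\bigr),
\]
with explicit constants $c_k>0$. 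We then check that $c_{\gamma_r}\ge\Gamma_n$ on all of $I_{n+1}$. Note that $t_{n+1}-\tau=t_n+\gamma_r(T+\tau)=s_{\gamma_r}$, so $I_{n+1}=[s_{\gamma_r},t_{n+1}]$.

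Base case $k=0$. For every $i$ and $t\ge t_n$, Lemma \ref{L1} gives $\langle x_j(t-\tau_{ij}(t)),v\rangle\ge m^v_n$. Together with $\sum_j b_{ij}\le K$, this yields
\[
\frac{d}{dt}\bigl(\langle x_i,v\rangle-m^v_n\bigr)\ \ge\ -\Bigl(\sum_j\chi_{ij}b_{ij}\Bigr)\bigl(\langle x_i,v\rangle-m^v_n\bigr)\ \ge\ -K\bigl(\langle x_i,v\rangle-m^v_n\bigr).
\]
The second inequality uses $\langle x_i,v\rangle-m^v_n\ge0$. Hence $\langle x_i(t),v\rangle-m^v_n\ge e^{-K(t-\sigma)}\bigl(\langle x_i(\sigma),v\rangle-m^v_n\bigr)$ for $t\ge\sigma\ge t_n$. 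Applied to $i=r$ with $\sigma=t_n$, this gives $c_0=e^{-K(t-t_n)}$.

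Inductive step. Let $i\in\mathcal{V}_{k+1}$ and choose an in-neighbour $j\in\mathcal{V}_k$ with $\chi_{ij}=1$. We keep only the $j$-term of the positive part in the equation for $i$:
\[
\frac{d}{dt}\bigl(\langle x_i,v\rangle-m^v_n\bigr)\ \ge\ -K\bigl(\langle x_i,v\rangle-m^v_n\bigr)+\frac{\alpha_{ij}(t)\psi_0}{N-1}\bigl(\langle x_j(t-\tau_{ij}(t)),v\rangle-m^v_n\bigr).
\]
Here Lemma \ref{L3onoff} gives $\psi\ge\psi_0$. For $t\in[s_k+\tau,\,s_k+\tau+T]$ we have $t-\tau_{ij}(t)\ge s_k$, so the induction hypothesis controls the delayed term from below. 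Integrating with the factor $e^{K t}$ over this window of length $T$ and using \eqref{PE} with $t=s_k+\tau$ gives
\[
\int\alpha_{ij}\ \ge\ \frac{\tilde\alpha}{(1+t_n+k(T+\tau)+\tau)^\beta}.
\]
This produces a gain
\[
c_{k+1}\ \gtrsim\ e^{-K(\cdots)}\,\frac{\psi_0\tilde\alpha}{(N-1)(1+\cdots)^\beta}\,c_k
\]
at time $s_{k+1}$. The $e^{-K(t-\sigma)}$ bound then propagates it up to $t_{n+1}$.

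Bookkeeping. We take the worst denominator over all layers, $k=\gamma_r-1$. This gives $(1+t_n+(\gamma_r-1)T+\gamma_r\tau)^\beta$, which matches the base in \eqref{Gammaonoff}. Multiplying the $\gamma_r$ factors yields $\bigl(\psi_0\tilde\alpha/((N-1)(\cdots)^\beta)\bigr)^{\gamma_r}$. We collect the exponential losses from each window, from the propagation to the end of $I_{n+1}$, and from $c_0$ (which is evaluated at times up to $s_k+\tau+T$). We expect the total exponent to be bounded by $K\bigl(\tfrac12(\gamma_r^2+3\gamma_r)(T+\tau)+\tau\bigr)$. The quadratic term comes from the root's decay being carried through the sum $\sum_{k=1}^{\gamma_r}k(T+\tau)$, plus linear terms from the windows.

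Main obstacle. The delicate step is the exponential accounting. Each layer's lower bound decays between the time it is established and the time it is used. We therefore need to track carefully at which time each $c_k$ is evaluated, so that the product of losses stays within the stated exponent. If the naive count exceeds it, we will use the fact that $\tilde\alpha\le 1/K$ (Remark after Theorem \ref{consres}) to absorb factors, or sharpen the propagation by integrating directly from $s_k$ instead of chaining.
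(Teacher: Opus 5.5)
Your layer-by-layer induction is essentially the paper's argument. The paper chains along a shortest path $r=i_0,\dots,i_p=i$, applies Gronwall at each step, and uses \eqref{PE} on the windows $[t_n+k(T+\tau)+\tau,\,t_n+(k+1)(T+\tau)]$; it writes out the upper bound directly, whereas you prove the lower bound via $y_i=\langle x_i,v\rangle-m_n^v$ and recover the upper bound from the $v\mapsto -v$ symmetry. The bookkeeping you left open closes exactly with no fallback needed: the uniform root bound $e^{-K(\gamma_r(T+\tau)+\tau)}$ plus the loss $e^{-K(t_{n+1}-s_k-\tau)}=e^{-K(\gamma_r-k)(T+\tau)}$ at each layer $k$ sums to $K\bigl(\tfrac12(\gamma_r^2+3\gamma_r)(T+\tau)+\tau\bigr)$ (your time-dependent $c_0$ only improves this), and agents in layers $k<\gamma_r$, including $r$, are covered because each factor $\psi_0\tilde\alpha/((N-1)(\cdots)^\beta)$ is at most $1$ by $\tilde\alpha\le 1/K$.
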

	\begin{oss}
		Note that, due to $\tilde{\alpha}\leq \frac{1}{K}$, by definition of $\psi_{0}$ we have $\psi_{0}\tilde{\alpha}\leq 1$, from which $\Gamma_n\in (0,1)$.
	\end{oss}
	\begin{proof}[Proof of Proposition \ref{lemma3onoff}]
		 Let $n\in \mathbb{N}_0$ and $v \in \mathbb{R}^d$. Let us fix an index $i=1,\dots,N$ with $i\neq r$. Due to the fact that the direct graph $\mathcal{G}$ is rooted, we can always find a path from $r$ to $i$. Let $i_0=r,i_1,\dots,i_p=i$, $1\leq p\leq \gamma_r$, be the shortest from $r$ to $i$. Then, for a.e. $t\in [n(\gamma_r(T+\tau)+\tau),(n+1)(\gamma_r(T+\tau)+\tau)]$, from \eqref{scalpronoff} we can write
		$$\begin{array}{l}
			\vspace{0.3cm}\displaystyle{\frac{d}{dt}\langle x_r(t),v\rangle=\sum_{j:j\neq r}\chi_{rj}(t)b_{rj}(t)(\langle x_j(t-\tau_{rj}(t)),v\rangle-\langle x_r(t),v\rangle)}\\
			\vspace{0.3cm}\displaystyle{\hspace{1.5cm}\leq \sum_{j:j\neq r}\chi_{rj}b_{rj}(t)(M_n^v-\langle x_r(t),v\rangle)}\\
			\displaystyle{\hspace{1.5cm}\leq \frac{K}{N-1}\sum_{j:j\neq r}(M_n^v-\langle x_r(t),v\rangle)=K(M_n^v-\langle x_r(t),v\rangle).}
		\end{array}$$
		The Gronwall's inequality yields
		\begin{equation}\label{primopasso}\begin{array}{l}
			\vspace{0.3cm}\displaystyle{\langle x_r(t),v\rangle\leq e^{-K(t-n(\gamma_r(T+\tau)+\tau))}\langle x_r(n(\gamma_r(T+\tau)+\tau)),v\rangle+M_n^v(1-e^{-K(t-n(\gamma_r(T+\tau)+\tau)})}\\
			\vspace{0.3cm}\displaystyle{\hspace{1.5cm}=M_n^v-e^{-K(t-n(\gamma_r(T+\tau)+\tau))}(M_n^v-\langle x_r(n(\gamma_r(T+\tau)+\tau)),v \rangle )}\\
			\displaystyle{\hspace{1.5cm}\leq M_n^v-e^{-K(\gamma_r(T+\tau)+\tau)}(M_n^v-\langle x_r(n(\gamma_r(T+\tau)+\tau)),v \rangle),}
		\end{array}
        \end{equation}
        for all $t\in [n(\gamma_r(T+\tau)+\tau),(n+1)(\gamma_r(T+\tau)+\tau)]$.
		\\Now, for a.e. $t \in [n(\gamma_r(T+\tau)+\tau)+\tau,(n+1)(\gamma_r(T+\tau)+\tau)]$, using \eqref{scalpronoff} and \eqref{primopasso} we get
		$$\begin{array}{l}
			\vspace{0.3cm}\displaystyle{\frac{d}{d t}  \langle x_{i_1}(t),v\rangle =  \sum_{j \neq i_1, r} \chi_{i_1j} b_{i_1j}(t)( \langle x_{j}(t-\tau_{i_1j}(t)),v\rangle-\langle x_{i_1}(t),v\rangle)+ b_{i_1r}(t)(\langle x_{r}(t-\tau_{i_1r}(t)),v\rangle-\langle x_{i_1}(t),v\rangle) }\\
			\vspace{0.3cm}\displaystyle{\hspace{1cm}\leq \sum_{j \neq i_1 ,r} \chi_{i_1j} b_{i_1j}(t)( M_{n}^v-\langle x_{i_1}(t),v\rangle)}\\
			\vspace{0.3cm}\displaystyle{\hspace{3cm}+b_{i_1r}(t)\left(M_n^v-e^{-K(\gamma_r(T+\tau)+\tau)}(M_n^v-\langle x_r(n(\gamma_r(T+\tau)+\tau)),v \rangle)-\langle x_{i_1}(t),v\rangle\right)}\\
			\vspace{0.3cm}\displaystyle{\hspace{1cm}=( M_{n}^v-\langle x_{i_1}(t),v\rangle)\sum_{j \neq i_1 , r} \chi_{i_1j} b_{i_1j}(t)}\\
			\displaystyle{\hspace{2cm}+ b_{i_1 r}(t)\left(M_n^v-e^{-K(\gamma_r(T+\tau)+\tau)}(M_n^v-\langle x_r(n(\gamma_r(T+\tau)+\tau)),v \rangle)-\langle x_{i_1}(t),v\rangle\right).}
		\end{array}$$
		Noticing that
		$$\sum_{j \neq i_1 ,r} \chi_{i_1j} b_{i_1j}(t)=\sum_{\substack{j \neq i_1 }} \chi_{i_1j} b_{i_1j}(t)-b_{i_1r}(t)\leq \frac{KN_{i_1}}{N-1}-b_{i_1r}(t),$$
		from \eqref{stima_psionoff} we obtain
		$$\begin{array}{l}
			\vspace{0.3cm}\displaystyle{\frac{d}{d t}  \langle x_{i_1}(t),v\rangle \leq \frac{KN_{i_{1}}}{N-1}(M_{n}^v-\langle x_{i_1}(t),v\rangle) -b_{i_1r}(t)(M_{n}^v-\langle x_{i_1}(t),v\rangle) }\\
			\vspace{0.3cm}\displaystyle{\hspace{3cm}+ b_{i_1r}(t)\left(M_n^v-e^{-K(\gamma_r(T+\tau)+\tau)}(M_n^v-\langle x_r(n(\gamma_r(T+\tau)+\tau)),v \rangle)-\langle x_{i_1}(t),v\rangle\right)}\\
			\vspace{0.3cm}\displaystyle{\hspace{1cm}=\frac{KN_{i_{1}}}{N-1}(M_{n}^v-\langle x_{i_1}(t),v\rangle) -e^{-K(\gamma_r(T+\tau)+\tau)}(M_n^v-\langle x_r(n(\gamma_r(T+\tau)+\tau)),v \rangle)b_{i_1r}(t)}\\
			\vspace{0.3cm}\displaystyle{\hspace{1cm}\leq \frac{KN_{i_{1}}}{N-1}(M_{n}^v-\langle x_{i_1}(t),v\rangle) -e^{-K(\gamma_r(T+\tau)+\tau)}(M_n^v-\langle x_r(n(\gamma_r(T+\tau)+\tau)),v \rangle)\alpha_{i_1r}(t)\frac{\psi_{0}}{N-1}}\\
			\displaystyle{\hspace{1cm}=\frac{KN_{i_{1}}}{N-1} M_n^v-e^{-K(\gamma_r(T+\tau)+\tau)}(M_n^v-\langle x_r(n(\gamma_r(T+\tau)+\tau)),v \rangle)\alpha_{i_1r}(t)\frac{\psi_{0}}{N-1}-\frac{KN_{i_{1}}}{N-1}\langle x_{i_1}(t),v\rangle.}
		\end{array}$$
		Hence, the Gronwall's inequality gives
		$$ \begin{array}{l}
			\vspace{0.3cm}\displaystyle{\langle x_{i_1}(t),v\rangle\leq e^{-\frac{KN_{i_1}}{N-1}(t-n(\gamma_r(T+\tau)+\tau)-\tau)} \langle x_{i_1}(n(\gamma_r(T+\tau)+\tau)+\tau),v\rangle+M^v_n(1-e^{-\frac{KN_{i_1}}{N-1}(t-n(\gamma_r(T+\tau)+\tau)-\tau)})} \\
			\vspace{0.3cm}\displaystyle{\hspace{1.5cm}-e^{-K(\gamma_r(T+\tau)+\tau)}(M_n^v-\langle x_r(n(\gamma_r(T+\tau)+\tau)),v \rangle)\frac{\psi_0}{N-1}\int_{n(\gamma_r(T+\tau)+\tau)+\tau}^{t}\alpha_{i_1r}(s)e^{-\frac{KN_{i_1}}{N-1}(t-s)}ds}\\
			\vspace{0.3cm}\displaystyle{\hspace{0.8cm}\leq e^{-\frac{KN_{i_1}}{N-1}(t-n(\gamma_r(T+\tau)+\tau)-\tau)}M_n^v +M^v_n(1-e^{-\frac{KN_{i_1}}{N-1}(t-n(\gamma_r(T+\tau)+\tau)-\tau)})}\\
			\vspace{0.3cm}\displaystyle{\hspace{1.5cm}-e^{-K(\gamma_r(T+\tau)+\tau)}(M_n^v-\langle x_r(n(\gamma_r(T+\tau)+\tau)),v \rangle)e^{-K\gamma_r(T+\tau)}\frac{\psi_0}{N-1}\int_{n(\gamma_r(T+\tau)+\tau)+\tau}^{t}\alpha_{i_1r}(s)ds}\\
			\vspace{0.3cm}\displaystyle{\hspace{0.8cm}=M^v_n-e^{-K(2\gamma_r(T+\tau)+\tau)}(M_n^v-\langle x_r(n(\gamma_r(T+\tau)+\tau)),v \rangle)\frac{\psi_0}{N-1}\int_{n(\gamma_r(T+\tau)+\tau)}^{t}\alpha_{i_1r}(s)ds,}
		\end{array}$$
		for all $t\in [n(\gamma_r(T+\tau)+\tau)+\tau,(n+1)(\gamma_r(T+\tau)+\tau)]$. In particular, for $t\in [n(\gamma_r(T+\tau)+\tau)+T+\tau,(n+1)(\gamma_r(T+\tau)+\tau)]$, since the Generalized Persistence Excitation Condition \eqref{PE} implies that 
		$$\int_{n(\gamma_r(T+\tau)+\tau)}^{t}\alpha_{i_1r}(s)ds\geq \int_{n(\gamma_r(T+\tau)+\tau)+\tau}^{n(\gamma_r(T+\tau)+\tau)+T+\tau}\alpha_{i_1r}(s)ds\geq \frac{\tilde{\alpha}}{(1+n(\gamma_r(T+\tau)+\tau)+\tau)^\beta},$$
		we obtain
		\begin{equation}\label{i_1T}
			\langle x_{i_1}(t),v\rangle\leq M^v_n-e^{-K(2\gamma_r(T+\tau)+\tau)}(M_n^v-\langle x_r(n(\gamma_r(T+\tau)+\tau)),v \rangle)\frac{\psi_0\tilde{\alpha}}{(N-1)(1+n(\gamma_r(T+\tau)+\tau)+\tau)^\beta}.
		\end{equation}	
		At this point, if $p=1$, we stop the argument. Otherwise, if $p>1$, using \eqref{i_1T} we estimate, for $t\in [n(\gamma_r(T+\tau)+\tau)+T+2\tau,(n+1)(\gamma_r(T+\tau)+\tau)]$,
		$$\begin{array}{l}
			\vspace{0.3cm}\displaystyle{\frac{d}{d t}  \langle x_{i_2}(t),v\rangle =  \sum_{j \neq i_1, i_2} \chi_{i_2j} b_{i_2j}(t)( \langle x_{j}(t-\tau_{i_2j}(t)),v\rangle-\langle x_{i_2}(t),v\rangle)+ b_{i_2i_1}(t)(\langle x_{i_1}(t-\tau_{i_2i_1}(t)),v\rangle-\langle x_{i_2}(t),v\rangle) }\\
			\vspace{0.3cm}\displaystyle{\hspace{2cm}\leq ( M_{n}^v-\langle x_{i_2}(t),v\rangle)\sum_{j \neq i_1, i_1} \chi_{i_2j} b_{i_2j}(t)} \\
			\displaystyle{+ b_{i_2i_1}(t)\Big (M^v_n-e^{-K(2\gamma_r(T+\tau)+\tau)}(M_n^v-\langle x_r(n(\gamma_r(T+\tau)+\tau)),v \rangle)\frac{\psi_0\tilde{\alpha}}{(N-1)(1+n(\gamma_r(T+\tau)+\tau)+\tau)^\beta}}\\ \vspace{0.3cm}
				\displaystyle{\hspace{16 cm} -\langle x_{i_2}(t),v\rangle\Big).}
		\end{array}$$
		Therefore, arguing as above, 
		$$\begin{array}{l}
			\vspace{0.3cm}\displaystyle{\frac{d}{d t}  \langle x_{i_2}(t),v\rangle \leq \frac{KN_{i_{2}}}{N-1}(M_{n}^v-\langle x_{i_2}(t),v\rangle) -b_{i_2i_1}(t)(M_{n}^v-\langle x_{i_2}(t),v\rangle) + b_{i_2i_1}(t)(M^v_n-\langle x_{i_2}(t),v\rangle)}\\
			\vspace{0.3cm}\displaystyle{\hspace{1.5cm}-b_{i_2i_1}(t)e^{-K(2\gamma_r(T+\tau)+\tau)}(M_n^v-\langle x_r(n(\gamma_r(T+\tau)+\tau)),v \rangle)\frac{\psi_0\tilde{\alpha}}{(N-1)(1+n(\gamma_r(T+\tau)+\tau)+\tau)^\beta}}\\
			\vspace{0.3cm}\displaystyle{\hspace{0.8cm}\leq \frac{KN_{i_{2}}}{N-1}M_{n}^v-\frac{KN_{i_{2}}}{N-1}\langle x_{i_2}(t),v\rangle}\\
            \displaystyle{\hspace{1.5cm}-\alpha_{i_2i_1}(t)e^{-K(2\gamma_r(T+\tau)+\tau)}(M_n^v-\langle x_r(n(\gamma_r(T+\tau)+\tau)),v \rangle)\left(\frac{\psi_0}{N-1}\right)^2\frac{\tilde{\alpha}}{(1+n(\gamma_r(T+\tau)+\tau)+\tau)^\beta}.}
		\end{array}$$
		Again, the Gronwall's inequality yields
		$$ \begin{array}{l}
			\vspace{0.3cm}\displaystyle{\langle x_{i_2}(t),v\rangle\leq M^v_n -e^{-K(2\gamma_r(T+\tau)+{\tau})}(M_n^v-\langle x_r(n(\gamma_r(T+\tau)+\tau)),v \rangle)\left(\frac{\psi_0}{N-1}\right)^2\frac{\tilde{\alpha}}{(1+n(\gamma_r(T+\tau)+\tau)+\tau)^\beta}\times}\\
			\vspace{0.3cm}\displaystyle{\hspace{4cm} \times\int_{n(\gamma_r(T+\tau)+\tau)+T+2\tau}^{t}\alpha_{i_2i_1}(s)e^{-\frac{KN_{i_2}}{N-1}(t-s)}ds}\\
			\vspace{0.3cm}\displaystyle{\hspace{1.5cm}\leq M^v_n -e^{-K(3\gamma_r(T+\tau)-T)}(M_n^v-\langle x_r(n(\gamma_r(T+\tau)+\tau)),v \rangle)\left(\frac{\psi_0}{N-1}\right)^2\frac{\tilde{\alpha}}{(1+n(\gamma_r(T+\tau)+\tau)+\tau)^\beta}\times}\\
			\displaystyle{\hspace{4cm}\times\int_{n(\gamma_r(T+\tau)+\tau)+T+2\tau}^{t}\alpha_{i_2i_1}(s)ds,}
		\end{array}$$
		for all $t\in [n(\gamma_r(T+\tau)+\tau)+T+2\tau,T+2\tau,(n+1)(\gamma_r (T+\tau)+\tau)]$. In particular, for $t\in [n(\gamma_r(T+\tau)+\tau)+2T+2\tau,(n+1)(\gamma_r(T+\tau)+\tau)]$, since from \eqref{PE} it comes that
		$$\int_{n(\gamma_r(T+\tau)+\tau)+T+2\tau}^{t}\alpha_{i_2i_1}(s)ds\geq \int_{n(\gamma_r(T+\tau)+\tau)+T+2\tau}^{n(\gamma_r(T+\tau)+\tau)+2T+2\tau}\alpha_{i_2i_1}(s)ds\geq \frac{\tilde\alpha}{(1+n(\gamma_r(T+\tau)+\tau)+T+2\tau)^\beta},$$
		we can conclude that
		\begin{equation}\label{i_2T}
			\begin{array}{l}
				\vspace{0.3cm}\displaystyle{\langle x_{i_2}(t),v\rangle\leq M^v_n-e^{-K(3\gamma_r (T+\tau)-T)}(M_n^v-\langle x_r(n(\gamma_r(T+\tau)+\tau)),v \rangle)\left(\frac{\psi_0\tilde{\alpha}}{N-1}\right)^2\times}\\
				\vspace{0.3cm}\displaystyle{\hspace{1.8cm}\times\frac{1}{(1+n(\gamma_r(T+\tau)+\tau)+T+2\tau)^\beta}\frac{1}{(1+n(\gamma_r(T+\tau)+\tau)+\tau)^\beta}}\\
				\displaystyle{\hspace{0.8cm}\leq M^v_n-e^{-K(3\gamma_r (T+\tau)-T)}(M_n^v-\langle x_r(n(\gamma_r(T+\tau)+\tau)),v \rangle)\left(\frac{\psi_0\tilde{\alpha}}{(N-1)(1+n(\gamma_r(T+\tau)+\tau)+T+2\tau)^\beta}\right)^2.}
			\end{array}
		\end{equation}
		Finally, iterating the above procedure,
		\begin{equation} \label{5.13onoff}
			\begin{array}{l}
				\vspace{0.3cm}\displaystyle{\langle x_{i_k}(t),v\rangle \leq M^v_{n}-e^{- K((k+1)\gamma_r(T+\tau) -\left(\sum_{l=0}^{k-1}l\right)(T+\tau)+\tau)} (M_n^v-\langle x_r(n(\gamma_r(T+\tau)+\tau)),v \rangle)\times}\\
				\displaystyle{\hspace{2.5cm}\times\left(\frac{\psi_{0}\tilde{\alpha}}{(N-1)(1+n(\gamma_r(T+\tau)+\tau)+(k-1)T+k\tau)^\beta}\right)^{k},}
			\end{array}
		\end{equation}
		for all $1\leq k\leq p$ and for all $t\in [n(\gamma_r (T+\tau)+\tau)+k(T+\tau),(n+1)(\gamma_r (T+\tau)+\tau)]$. In particular, for $k=p$, inequality \eqref{5.13onoff} reads as
        \begin{equation}\label{agenti}
        \begin{split}
           & \langle x_{i}(t),v\rangle \leq M^v_{n}-e^{- K((p+1)\gamma_r(T+\tau) -\left(\sum_{l=0}^{p-1}l\right)(T+\tau)+\tau)} (M_n^v-\langle x_r(n(\gamma_r(T+\tau)+\tau)),v \rangle) \\
           & \hspace{4cm}\left(\frac{\psi_{0}\tilde{\alpha}}{(N-1)(1+n(\gamma_r(T+\tau)+\tau)+(p-1)T+p\tau)^\beta}\right)^{p},
            \end{split}
        \end{equation} 
        for all $t\in [n(\gamma_r (T+\tau)+\tau)+p(T+\tau),(n+1)(\gamma_r (T+\tau)+\tau)]$.
        \\Note that, if the path has length $p=\gamma_r$, using that $\sum_{l=0}^{\gamma_r-1}l=\frac{\gamma_r(\gamma_r-1)}{2}$, estimate \eqref{agenti} gives
		\begin{equation}\label{5.13gammaonoff}
        \begin{array}{l}
        	\vspace{0.3cm}\displaystyle{\langle x_{i}(t),v\rangle \leq M^v_{n}}\\
        	\displaystyle{\hspace{0.1cm}-e^{-K (\frac{1}{2}(\gamma_{r}^2+3\gamma_r)(T+\tau)+\tau)}(M_n^v-\langle x_r(n(\gamma_r(T+\tau)+\tau)),v \rangle)\Big(\frac{\psi_{0}\tilde{\alpha}}{(N-1)(1+n(\gamma_r(T+\tau)+\tau)+(\gamma_r-1)T+\gamma_r\tau)^\beta}\Big)^{\gamma_r},}
        \end{array}
		\end{equation}
		for all $t\in [(n+1)(\gamma_r(T+\tau)+\tau))-\tau,(n+1)(\gamma_r(T+\tau)+\tau))]=I_{n+1}$. Hence, due to the fact that \eqref{5.13gammaonoff} is the coarsest estimate, we conclude that \eqref{5.13gammaonoff} holds for every $i=1,\dots,N$ and $t\in I_{n+1}$. 
\\On the other hand, using analogous arguments one can show that
		\begin{equation*}
		    \begin{array}{l}
		    	\vspace{0.3cm}\displaystyle{\langle x_{i}(t),v\rangle \geq m^v_{n}}\\
		    	\displaystyle{\hspace{0.1cm}+e^{-K(\frac{1}{2}(\gamma_{r}^2+3\gamma_r)(T+\tau)+\tau)}(\langle x_r(n(\gamma_r(T+\tau)+\tau)),v \rangle-m_{n}^v)\Big(\frac{\psi_{0}\tilde{\alpha}}{(N-1)(1+n(\gamma_r(T+\tau)+\tau)+(\gamma_r-1)T+\gamma_r\tau)^\beta}\Big )^{\gamma_r}.}
		    \end{array}
		\end{equation*}
		for all $i=1,\dots,N$ and $t\in I_{n+1}$. Combining this last estimate with \eqref{5.13gammaonoff} we finally obtain \eqref{Bonoff}.
	\end{proof} 
    We are now able to prove Theorem \ref{consres}.
	\begin{proof}[Proof of Theorem \ref{consres}]
		Let $v\in \RR^d$. Let us denote with $$\mathcal{D}^v_n:=M^v_n-m^v_n,\quad \forall n\in \mathbb{N}_0,$$
		where $m_n^v$, $M_n^v$ are defined in \eqref{m_n} and \eqref{M_n}, respectively. Note that, for all $n\in \mathbb{N}_0$, due to $M_n^v\geq m_n^v$ we have $\mathcal{D}^v_n\geq 0$.
		\\Let $\Gamma_n\in (0,1)$ be the constant defined in \eqref{Gammaonoff}. We claim that
		\begin{equation}\label{D_nonoff}
			\mathcal{D}^v_{n+1}\leq (1-\Gamma_n)\mathcal{D}_{n}^v,\quad \forall n\in \mathbb{N}_0.
		\end{equation}
		Indeed, given $n\in \mathbb{N}_0$, if $i,j=1,\dots,N$ and $s,t\in I_{n+1}$ are such that $\langle x_i(s),v\rangle=M^v_{n+1}$ and $\langle x_j(t),v\rangle=m^v_{n+1}$, applying estimate \eqref{Bonoff} we find 
		\begin{equation}\label{calD}
			\begin{array}{l}
            \vspace{0.3cm}\displaystyle{\hspace{1.5cm}\mathcal{D}^v_{n+1}=\langle x_i(s),v\rangle-\langle x_j(t),v\rangle}\\
				\vspace{0.3cm}\displaystyle{\hspace{2cm}\leq M_{n}^v-m_{n}^v-\Gamma_n(M_{n}^v-\langle x_r(n(\gamma_r(T+\tau)+\tau)),v \rangle)-\Gamma_n(\langle x_r(n(\gamma_r(T+\tau)+\tau)),v \rangle-m_{n}^v)} \\
                \displaystyle{\hspace{2cm}=M_{n}^v-m_{n}^v-\Gamma_n(M_{n}^v-m_{n}^v)=(1-\Gamma_n)\mathcal{D}_n^v.}                
			\end{array}
		\end{equation}
		Moreover, since the constant $\Gamma_n$ in \eqref{D_nonoff} is independent of the choice of the vector $v\in\mathbb{R}^d$, from \eqref{D_nonoff} we obtain
		\begin{equation}\label{deconoff}
			D_{n+1}\leq (1-\Gamma_n)D_{n},\quad \forall n\in \mathbb{N}_0.
		\end{equation}
	To see this, let $n\in \mathbb{N}_0$. Let $i,j=1,\dots,N$ and $s,t\in I_{n+1}$ be such that $D_{n+1}=\lvert x_i(s)-x_j(t)\rvert.$ We can assume without loss of generality that $D_{n+1}>0$. Then, defining the unit vector $v = \frac{x_i(s)-x_j(t)}{\lvert x_i(s)-x_j(t)\rvert}$, from \eqref{scalpronoff} and \eqref{D_nonoff} we find
		 $$\begin{array}{l}
			\vspace{0.3cm}\displaystyle{D_{n+1}=\langle x_i(s),v\rangle-\langle x_j(t),v\rangle\leq M^v_{n+1}-m^v_{n+1}=\mathcal{D}^v_{n+1}}\\
			\vspace{0.3cm}\displaystyle{\hspace{1cm}\leq (1-\Gamma_n)\mathcal{D}^v_{n}=(1-\Gamma_n)(M^v_{n}-m^v_{n})}\\
			\displaystyle{\hspace{1cm}\leq (1-\Gamma_n)\max_{k,l=1,\dots,N}\max_{r,w\in I_n}\lvert x_{k}(r)-x_{l}(w)\rvert=(1-\Gamma_n)D_{n}.}
		\end{array}$$
		Now, from \eqref{deconoff}, an induction argument gives
		\begin{equation*}
			D_{n}\leq \prod_{k=0}^{n-1}(1-\Gamma_k)D_0,\quad \forall n\in \mathbb{N},
		\end{equation*}
	  that can be rewritten equivalently as follows
		\begin{equation}\label{decayonoffriscritta}
			D_{n}\leq e^{{\sum\limits_{k=0}^{n-1}\ln(1-\Gamma_k)}}D_0,\quad \forall n\in \mathbb{N}.
		\end{equation}
		Moreover, using \eqref{diamonoff2}, we can write $$d(t)\leq e^{\sum\limits_{k=0}^{n-1}\ln(1-\Gamma_k)}D_0,\quad \forall t\geq n(\gamma_r(T+\tau)+\tau)-\tau,\,\forall n\in\mathbb{N}.$$
		Therefore, to prove the asymptotic consensus it just remains to show that 
        $\sum\limits_{k=0}^{\infty}\ln(1-\Gamma_k)=-\infty$. Notice that $\Gamma_k\to 0$, as $k\to \infty$. Thus, $\ln(1-\Gamma_k)\sim-\Gamma_k\sim -\frac{1}{k^{\beta\gamma_r}}$, as $k\to \infty$. Thus, using the assumption $\beta\leq \frac{1}{\gamma_r}$ we have $\sum\limits_{k=0}^{\infty}\ln(1-\Gamma_k)=-\infty$. This concludes the proof.
	\end{proof}
\begin{oss}[Case $\beta=0$]\label{casobeta0}
	Note that, if $\beta=0$ in the Generalized Persistence Excitation Condition \eqref{PE}, namely if the weights satisfy the Persistence Excitation Condition \eqref{classicalPE}, then consensus is achieved exponentially fast. Indeed, in this case (see the definition \eqref{Gammaonoff}) we have $\Gamma_n=e^{-K (\frac{1}{2}(\gamma_{r}^2+3\gamma_r)(T+\tau)+\tau)}\left(\frac{\psi_{0}\tilde{\alpha}}{N-1}\right)^{\gamma_r}:=\Gamma$, for all $n\in\mathbb{N}_0$. Therefore, \eqref{deconoff} reads as 
	$$D_{n+1}\leq (1-\Gamma)D_n.$$
	Arguing as in the proof of Theorem \ref{consres}, an induction argument gives
	$$D_n\leq e^{-n\ln\left(\frac{1}{1-\Gamma}\right)}D_0,\quad \forall n\in \mathbb{N}_0.$$
	At this point, given $t\geq 0$, since $t\in [n(\gamma_r(T+\tau)+\tau),(n+1)(\gamma_r(T+\tau)+\tau)]$, for some $n\in\mathbb{N}_0$, the above estimate together with \eqref{diamonoff2} gives 
	$$d(t)\leq D_n\leq e^{-n\ln\left(\frac{1}{1-\Gamma}\right)}D_0\leq e^{-\frac{1}{\gamma_r(T+\tau)+\tau}\ln\left(\frac{1}{1-\Gamma}\right)(t-\gamma_r(T+\tau)-\tau)}D_0.$$
	So, we have exponential decay of the diameter of the solution.
\end{oss}
\section{The second-order alignment\label{4} model}\label{secdelay}
\setcounter{equation}{0}
	In this section, we analyze the second-order version of system \eqref{onoff}. We have a finite set of $N\in\N$ particles, with $N\geq 2 $. We denote with $x_{i}(t)\in \RR^d$ and $v_{i}(t)\in \RR^d$ the position and the velocity of the $i$-th particle at time $t$, respectively. We assume that the agents interact among themselves according to the following Cucker-Smale type rule:
	\begin{equation}\label{csp}
		\begin{cases}
			\frac{d}{dt}x_{i}(t)=v_{i}(t),\quad &t>0, \,\,\forall i=1,\dots,N,\\\frac{d}{dt}v_{i}(t)=\underset{j:j\neq i}{\sum}\chi_{ij}b_{ij}(t)(v_{j}(t-\tau_{ij}(t))-v_{i}(t)),\quad 	&t>0,\,\,\forall i=1,\dots,N.
		\end{cases}
	\end{equation}

	As before, the time delay functions $\tau_{ij}:[0,+\infty)\rightarrow[0,+\infty)$ are continuous and satisfy \eqref{taubounded}, the communication rates $b_{ij}$ are defined in \eqref{weight} with an influence function of the form \eqref{psi}, namely depending on the distance between agents' positions. 
	
	We prescribe initial conditions that are continuous functions from $[-\tau,0]$ to $\mathbb{R}^d$
	\begin{equation}\label{incondcs}
		x_{i}(s)=x^{0}_{i}(s),\quad v_{i}(s)=v^{0}_{i}(s), \quad \forall s\in [-\tau,0],\,\forall i=1,\dots,N,
	\end{equation}
	where $\tau>0$ is the maximal value of the time delay sizes. 
	
    \vspace{0.1cm}

    As in the previous section, the terms $\chi_{ij}$ are defined as in \eqref{chiij} and we equip our model with a network topology. Namely, we consider the direct graph $\mathcal{G}=(\mathcal{V},\mathcal{E})$, $\mathcal{V}=\{1,\dots,N\}$, whose adjacency matrix is $\{\chi_{ij} \}_{i,j=1,\dots,N}$. 
    
    \vspace{0.1cm}
    
    For solutions to \eqref{csp}, we will establish the asymptotic flocking. 
    
    \begin{defn}[Unconditional flocking]\label{unflock}
    	Let $\{x_{i},v_i\}_{i=1,\dots,N}$ be a solution to \eqref{csp}. We define the position diameter $d_X(\cdot)$ and the velocity diameter $d_V(\cdot)$ of the solution as follows, 
    	\begin{equation}\label{posdiam}
    		d_{X}(t):=\max_{i,j=1,\dots,N}\lvert x_{i}(t)-x_{j}(t)\rvert,\quad \forall t\geq-\tau,
    	\end{equation}
    	\begin{equation}\label{veldiam}
    		d_{V}(t):=\max_{i,j=1,\dots,N}\lvert v_{i}(t)-v_{j}(t)\rvert,\quad \forall t\geq -\tau.
    	\end{equation}
    	We say that the solution $\{(x_{i},v_{i})\}_{i=1,\dots,N}$ to \eqref{csp} exhibits asymptotic flocking if the two following conditions are fulfilled:
    	\begin{enumerate}
    		\item there exists a positive constant $d^{*}$ such that $$\sup_{t\geq-{\tau}}d_{X}(t)\leq d^{*};$$
    		\item $d_V(t)\to 0$, as $t\to \infty$.
    	\end{enumerate}
    	
    \end{defn}
Our main result is the following.
\begin{thm} \label{uf}
	Assume that the direct graph $\mathcal{G}$ is rooted (see Definition \ref{rooted}) and \eqref{taubounded}. Assume the weights $\{\alpha_{ij}\}_{i,j=1,\dots,N}$ belong to $\mathcal{L}^\infty([0,+\infty);[0,1])$ and satisfy \eqref{PE} with exponent $0\leq\beta<\frac{1}{\gamma_r}$, where $\gamma_{r}$ is the distance from the root defined in \eqref{gamma}. Let $\psi:\mathbb{R}^d\times\mathbb{R}^d\rightarrow\mathbb{R}$ be as in \eqref{psi} and suppose that {$\tilde \psi$ is bounded, continuous, and satisfies 
    \begin{equation}\label{infint}
		\tilde\psi(t)\geq \frac{1}{(1+t)^\eta},\quad \forall t\geq 0,
	\end{equation}}
    where $\eta>0$ is such that $(\eta+\beta)\gamma_r<1$.
    Moreover, let $x^{0}_{i},v_{i}^{0}:[-{\tau},0]\rightarrow \RR^{d}$ be continuous functions, for any $i=1,\dots,N$. Then, every solution $\{(x_{i},v_{i})\}_{i=1,\dots,N}$ to \eqref{csp}, \eqref{incondcs}, exhibits asymptotic flocking in the sense of Definition \ref{unflock}.
	\end{thm}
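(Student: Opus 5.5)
The plan is to repeat the first-order argument of Section~\ref{timedelay} on the velocities. The only genuinely new difficulty is that $\psi_0$ is no longer available a priori, because positions may spread. We replace it by a time-dependent lower bound coming from \eqref{infint}, and then close the argument with a bootstrap on the position diameter.

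\textbf{Velocity estimates.} Define $I_n$, $m_n^w$, $M_n^w$ and $D_n$ exactly as in Definitions~\ref{quantonoff} and \ref{Dn}, with $v_i$ in place of $x_i$. The proofs of Lemma~\ref{L1} and Lemma~\ref{lemmadiamonoff} use only the sign structure of the equation, so they carry over verbatim:
\begin{itemize}
\item velocity components stay in $[m_n^w,M_n^w]$;
\item $D_{n+1}\le D_n$;
\item $d_V(t)\le D_n$ for $t\ge n(\gamma_r(T+\tau)+\tau)-\tau$;
\item $|v_i(t)|\le C_0^V:=\max_j\max_{[-\tau,0]}|v_j|$.
\end{itemize}
From $\dot x_i=v_i$ and $|v_i-v_j|\le d_V\le D_0^V$ we get the linear bound
\[
|x_i(t)-x_j(t-\tau_{ij}(t))|\le d_X(0)+D_0^V t+C_0^V\tau\le C(1+t).
\]
Hence, by \eqref{infint} and the monotonicity-free lower bound,
\[
\psi(x_i(t),x_j(t-\tau_{ij}(t)))\ge \frac{c}{(1+t)^\eta}.
\]

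\textbf{Contraction.} We now redo the proof of Proposition~\ref{lemma3onoff}, with $K=\|\tilde\psi\|_\infty$, replacing $\psi_0$ on the interval $[t_n,t_{n+1}]$, $t_n:=n(\gamma_r(T+\tau)+\tau)$, by $c(1+t_{n+1})^{-\eta}$. This yields $D_{n+1}\le(1-\Gamma_n)D_n$ with
\[
\Gamma_n\ge \frac{c'}{(1+n)^{(\eta+\beta)\gamma_r}},
\]
and since $(\eta+\beta)\gamma_r<1\le 1$ the series $\sum_n\Gamma_n$ diverges. As in the proof of Theorem~\ref{consres}, this already gives $d_V(t)\to0$.

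\textbf{Bounded positions (main obstacle).} The hard part is the uniform bound on $d_X$, which requires $\sum_n D_n<\infty$. Divergence of $\sum\Gamma_n$ alone is not enough, so we use the strict inequality $\theta:=(\eta+\beta)\gamma_r<1$. Writing $\sum_{k<n}\Gamma_k\ge c''n^{1-\theta}$, we obtain
\[
D_n\le D_0\exp\!\big(-c''n^{1-\theta}\big),
\]
a stretched-exponential decay that is summable. Then, since $\tfrac{d}{dt}|x_i-x_j|\le d_V(t)\le D_n$ on each interval of length $\gamma_r(T+\tau)+\tau$,
\[
d_X(t)\le d_X(0)+(\gamma_r(T+\tau)+\tau)\sum_{n\ge0}D_n=:d^*<\infty .
\]
This gives condition 1 of Definition~\ref{unflock}.

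The step I expect to need the most care is keeping the constants $c,c',c''$ independent of $n$. They must not depend on any a priori bound on $d_X$, to avoid circularity, which is why I use the crude linear growth bound on positions rather than the final bound $d^*$. I also expect to need to check that the time shifts inside the iterated Gronwall estimates only modify $\Gamma_n$ by constants of the form $(1+t_{n+1})^{-\eta\gamma_r}\ge C(1+n)^{-\eta\gamma_r}$.
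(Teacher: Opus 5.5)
Your proposal is correct and follows the paper's proof essentially step by step: the a priori linear growth of $d_X$ combined with \eqref{infint} gives a contraction factor $\tilde\Gamma_n\ge C^*(a+bn)^{-(\eta+\beta)\gamma_r}$, hence stretched-exponential decay of the generalized velocity diameters, summability of $\int_0^{+\infty}d_V(t)\,dt$, and a uniform bound on $d_X$. The only slip is that when $t<\tau_{ij}(t)$ the point $t-\tau_{ij}(t)$ lies in the initial interval, so your bound on $\lvert x_i(t)-x_j(t-\tau_{ij}(t))\rvert$ must also include the term $M_0^X$ from \eqref{M_0} (as in Lemma \ref{lemmacs}); this is harmless, since it is absorbed into $C(1+t)$.
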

\begin{oss}\label{condizioneintegrale}
Let us note that, due to $\beta\gamma_r\geq 0$, $\eta\gamma_r<1-\gamma_r\beta\leq 1$. Thus, condition \eqref{infint} implies
\begin{equation}\label{infint2}
    \int_{0}^{+\infty}\left(\min_{r\in [0,t]}\tilde\psi(r)\right)^{\gamma_r}dt=+\infty.
\end{equation}
	In particular, if the influence function $\tilde\psi$ is nonincreasing and $\gamma_r=1$, namely the root can directly influence all the other agents, the condition \eqref{infint2} reduces to 
    the classical sufficient condition for the unconditional flocking (see e.g. \cite{Ha1}):
    \begin{equation}\label{unflockcond}
    \int_{0}^{+\infty}\tilde\psi(t)dt=+\infty.
    \end{equation}
	Here, however, since the weights satisfy the Generalized Persistence Excitation Condition \eqref{PE} the integrability condition \eqref{infint2} is not sufficient to obtain the unconditional flocking {and we need to the stronger condition \eqref{infint}. This will be clear in the proof of Theorem \ref{uf}.} Only in the case $\beta=0$, the condition \eqref{infint2} is sufficient for the unconditional flocking (see Remark \ref{beta=0}).  

    Moreover, differently from the consensus result for the first-order model Theorem \ref{consres}, {we need to require that the exponent $\beta$ in the Generalized Persistence Excitation Condition satisfies the strict inequality $\beta<\frac{1}{\gamma_r}$.}
\end{oss}
\begin{oss}
    Without loss of generality, we can assume that the positive constant in the Generalized Persistence Excitation Condition \eqref{PE} satisfies $\tilde{\alpha}\leq \frac{1}{K}$.
\end{oss}
	\subsection{Proof of the flocking result}
	Let us consider a solution $\{x_{i},v_{i}\}_{i=1,\dots,N}$ to \eqref{csp}, \eqref{incondcs}. Throughout this subsection, we assume the validity of the assumptions of Theorem \ref{uf}. Using analogous arguments to the ones employed in the previous section, we have the following preliminary estimates. Before stating the preliminary results, we introduce the following definitions. 
	\begin{defn}\label{quantcs}
		Given a vector $v\in \mathbb{R}^d$, for all $n\in \mathbb{N}_0$ we define
		\begin{equation}\label{r_n}
			{r}_n^v:=\min_{j=1,\dots,N}\min_{s\in I_n}\,\langle v_{j}(s),v\rangle,
		\end{equation}
		\begin{equation}\label{R_n}
			{R}_n^v:=\max_{j=1,\dots,N}\max_{s\in I_n}\,\langle v_{j}(s),v\rangle,
		\end{equation}	
where $I_n$ is the time interval defined in \eqref{In} of Definition \ref{quantonoff}, namely
$I_n=[n(\gamma_r (T+\tau)+\tau)-\tau,n(\gamma_r (T+\tau)+\tau)].$
	\end{defn}
		\begin{defn}[Generalized velocity diameters]\label{Fn}
		We define the generalized initial velocity diameter
		\begin{equation}\label{F0}
			F_0:=\max_{i,j =1,\dots, N}\max_{\sigma,s\in I_0}\lvert v_i(\sigma)-v_j(s)\rvert=\max_{i,j =1,\dots, N}\max_{\sigma,s\in [-\tau,0]}\lvert v_i(\sigma)-v_j(s)\rvert.
		\end{equation}
		For all $n\in \mathbb{N}_0$, we define the $n$-generalized velocity diameter
		\begin{equation}\label{GVD}
			F_n:=\max_{i,j =1,\dots, N}\max_{\sigma,s\in I_n}\lvert v_i(\sigma)-v_j(s)\rvert.
		\end{equation}
	\end{defn}
	
{As for the first-order model, we first  state some preliminary estimates (see \cite{CCP}).}
\begin{lem}\label{L1cs}
	For each $n\in \mathbb{N}_0$ and for any vector $v\in \RR^{d}$, we have 
	\begin{equation}\label{scalpronoffcs}
		r_n^v\leq \langle v_{i}(t),v\rangle \leq R_n^v,
	\end{equation}for all  $t\geq n(\gamma_r(T+\tau)+\tau)-\tau$ and for any $i=1,\dots,N$, where $r_n^v$ and $R_n^v$ are defined in \eqref{r_n} and \eqref{R_n}, respectively, $\gamma_r$ is the distance from the root defined in \eqref{gamma} and $T$ is the positive constant in \eqref{PE}.
\end{lem}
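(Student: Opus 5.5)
We need to prove Lemma \ref{L1cs}: for $t\ge t_n:=n(\gamma_r(T+\tau)+\tau)-\tau$, $r_n^v\le\langle v_i(t),v\rangle\le R_n^v$. The velocity equation in \eqref{csp} has exactly the structure of \eqref{onoff}, with $v_i$ in place of $x_i$ and nonnegative weights $b_{ij}(t)$. Nothing in the argument uses the particular form of $\psi$ beyond $b_{ij}\ge 0$ and boundedness, so the proof mirrors that of Lemma \ref{L1}.

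We prove the upper bound; the lower bound follows by replacing $v$ with $-v$, since $r_n^{-v}=-R_n^{v}$. Fix $\varepsilon>0$ and set
\[
S:=\{t\ge t_n:\ \langle v_i(s),v\rangle< R_n^v+\varepsilon\ \ \forall s\in[t_n,t],\ \forall i\}.
\]
By definition of $R_n^v$, the bound holds on $I_n=[t_n,t_n+\tau]$, so $S$ contains points near $t_n$ by continuity. Let $t^*:=\sup S$ and suppose, for contradiction, that $t^*<\infty$. By continuity there is an index $i$ with $\langle v_i(t^*),v\rangle=R_n^v+\varepsilon$, and necessarily $t^*>t_n+\tau$.

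On $[t_n+\tau,t^*]$, every delayed argument satisfies $t-\tau_{ij}(t)\ge t-\tau\ge t_n$, so $s-\tau_{ij}(s)\in[t_n,t^*]$. Hence $\langle v_j(s-\tau_{ij}(s)),v\rangle\le R_n^v+\varepsilon$ for all $j$. Write $\phi(s):=\langle v_i(s),v\rangle-(R_n^v+\varepsilon)$. Using $b_{ij}\ge0$ and $\sum_j\chi_{ij}b_{ij}\le K$, we get for a.e. $s\in[t_n+\tau,t^*]$
\[
\phi'(s)=\sum_{j\ne i}\chi_{ij}b_{ij}(s)\bigl(\langle v_j(s-\tau_{ij}(s)),v\rangle-\langle v_i(s),v\rangle\bigr)\le -\Bigl(\sum_{j\ne i}\chi_{ij}b_{ij}(s)\Bigr)\phi(s)\le K|\phi(s)|.
\]
Since $\phi\le0$ on this interval, Gronwall's inequality applied to the linear differential inequality $\phi'\le -a(s)\phi$ with $0\le a\le K$ gives
\[
\phi(t^*)\le \phi(t_n+\tau)\,e^{-\int_{t_n+\tau}^{t^*}a}.
\]
Now $\phi(t_n+\tau)\le -\varepsilon<0$, so $\phi(t^*)<0$. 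This contradicts $\phi(t^*)=0$.

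Therefore $t^*=\infty$. Letting $\varepsilon\to0$ yields $\langle v_i(t),v\rangle\le R_n^v$ for all $t\ge t_n$.

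The only delicate point is the bookkeeping of the delay: the delayed arguments must stay in $[t_n,t^*]$. This is exactly why the intervals $I_n$ have length $\tau$. The rest is routine, and the positivity of $\psi$ is not even needed here.
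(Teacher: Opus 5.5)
Your argument is correct and is essentially the one the paper relies on. The paper omits the proof and defers to the standard $\varepsilon$-continuation argument of \cite{CCP} (the velocity analogue of Lemma \ref{L1}): take the supremum of times on which all projections stay below $R_n^v+\varepsilon$, observe that after $t_n+\tau$ every delayed argument falls back into $[t_n,t^*]$, and reach a contradiction via Gronwall. The differences are cosmetic: you use the integrating factor for the variable rate $a(s)=\sum_{j\neq i}\chi_{ij}b_{ij}(s)$ rather than bounding the rate by $K$, and your local $\phi$ clashes with the paper's auxiliary function of the same name.
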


\begin{lem}\label{lemmadiamonoffcs}
	For each $n\in \mathbb{N}_0$, we have 
	\begin{equation}\label{diamonoffcs}
		\lvert v_i(s)-v_j(t)\rvert\leq F_n,
	\end{equation}
	for all  $s,t\geq n(\gamma_r (T+\tau)+\tau)-\tau$ and for any $i,j=1,\dots,N$, where $F_n$ is the $n$-generalized velocity diameter defined in \eqref{GVD}, $\gamma_r$ is the distance from the root defined in \eqref{gamma} and $T$ is the positive constant in \eqref{PE}.
	
	In particular, for all $n\in \mathbb{N}_0$, 	
	\begin{equation}\label{Fndec}
		F_{n+1}\leq F_n,
	\end{equation}
	and
	\begin{equation}\label{diamonoff2cs}
		d_V(t)\leq F_n,\quad \forall t\geq n(\gamma_r (T+\tau)+\tau)-\tau,
	\end{equation}
	where $d_V(\cdot)$ is the {velocity} diameter of the solution defined in \eqref{veldiam}.
\end{lem}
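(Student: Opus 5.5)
We prove Lemma \ref{lemmadiamonoffcs} by the same argument as its first-order counterpart Lemma \ref{lemmadiamonoff}, using Lemma \ref{L1cs} as the basic tool. The whole point is that the velocity equation in \eqref{csp} has exactly the structure of \eqref{onoff} with $x_i$ replaced by $v_i$. The communication rates $b_{ij}$ are nonnegative, since $\alpha_{ij}\ge 0$ and $\psi>0$. The specific dependence of $b_{ij}$ on positions plays no role in the maximum-principle type estimates. Hence Lemma \ref{L1cs} holds verbatim, and everything below follows from it by elementary manipulations.

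\emph{Step 1: proof of \eqref{diamonoffcs}.} Fix $n\in\mathbb{N}_0$, indices $i,j$, and times $s,t\ge n(\gamma_r(T+\tau)+\tau)-\tau$. We may assume $v_i(s)\neq v_j(t)$. Set $w:=\frac{v_i(s)-v_j(t)}{|v_i(s)-v_j(t)|}$. Applying \eqref{scalpronoffcs} with vector $w$ gives
\[
|v_i(s)-v_j(t)|=\langle v_i(s),w\rangle-\langle v_j(t),w\rangle\le R_n^w-r_n^w .
\]
By the definitions \eqref{r_n}, \eqref{R_n}, there exist $k,l$ and $\sigma,\rho\in I_n$ with $R_n^w=\langle v_k(\sigma),w\rangle$ and $r_n^w=\langle v_l(\rho),w\rangle$. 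Therefore
\[
R_n^w-r_n^w=\langle v_k(\sigma)-v_l(\rho),w\rangle\le |v_k(\sigma)-v_l(\rho)|\le F_n,
\]
using Cauchy--Schwarz and $|w|=1$. This proves \eqref{diamonoffcs}.

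\emph{Step 2: proof of \eqref{Fndec}.} Every point of $I_{n+1}$ is at least
\[
(n+1)(\gamma_r(T+\tau)+\tau)-\tau\ge n(\gamma_r(T+\tau)+\tau)-\tau .
\]
So all pairs of times entering the maximum defining $F_{n+1}$ satisfy the hypothesis of Step 1, and we get $F_{n+1}\le F_n$.

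\emph{Step 3: proof of \eqref{diamonoff2cs}.} Take $s=t$ in Step 1 and maximize over $i,j$.

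The only step requiring some care is the justification of Lemma \ref{L1cs} itself, which I take as given from the earlier statement. Were it needed, I would prove the upper bound by contradiction. Fix $\varepsilon>0$ and consider the first time after $n(\gamma_r(T+\tau)+\tau)$ at which some $\langle v_i,v\rangle$ reaches $R_n^v+\varepsilon$. Just before that time every delayed value $\langle v_j(t-\tau_{ij}(t)),v\rangle$ is below this level, so the right-hand side of the velocity equation drives $\langle v_i,v\rangle$ down. A Gronwall argument then gives the contradiction, and the lower bound follows by replacing $v$ with $-v$. Otherwise, the proof is routine bookkeeping.
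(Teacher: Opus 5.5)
Your proof is correct and follows essentially the same route as the paper, which omits the proof and defers to the first-order analogue. The key step is projecting onto the unit vector $w=(v_i(s)-v_j(t))/\lvert v_i(s)-v_j(t)\rvert$ and invoking Lemma~\ref{L1cs} to get $\lvert v_i(s)-v_j(t)\rvert\le R_n^w-r_n^w\le F_n$, which is exactly the device the paper uses elsewhere (e.g.\ in deriving \eqref{deconoff}), and $F_{n+1}\le F_n$ and the bound on $d_V$ then follow immediately, as you show.
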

\begin{lem}\label{L3onoffcs}
	We have \begin{equation}\label{boundsolonoffcs}
		\lvert v_{i}(t)\rvert\leq C^V_{0}:=\max_{j=1,\dots,N}\,\,\max_{s\in [-\tau, 0]}\lvert v_{j}(s)\rvert,
	\end{equation}
	for all $t\geq -\tau$ and $i=1,\dots,N$. 
\end{lem}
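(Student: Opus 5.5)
The bound \eqref{boundsolonoffcs} follows from Lemma \ref{L1cs} with $n=0$, applied to a suitable direction $v$. Fix $i$ and $t\geq -\tau$. If $v_i(t)=0$ there is nothing to prove. Otherwise set the unit vector $v:=v_i(t)/\lvert v_i(t)\rvert$. Since $t\geq -\tau = 0\cdot(\gamma_r(T+\tau)+\tau)-\tau$, Lemma \ref{L1cs} gives
\[
\lvert v_i(t)\rvert=\langle v_i(t),v\rangle\leq R_0^v=\max_{j=1,\dots,N}\max_{s\in[-\tau,0]}\langle v_j(s),v\rangle .
\]
By Cauchy--Schwarz with $\lvert v\rvert=1$, the right-hand side is at most $\max_j\max_{s\in[-\tau,0]}\lvert v_j(s)\rvert=C_0^V$, which is the claim.

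If one prefers not to rely on Lemma \ref{L1cs}, I would instead prove the invariance $\langle v_i(t),v\rangle\leq R_0^v$ directly, by the same argument as in \cite{CCP}. Fix a direction $v$ and $\epsilon>0$, and let $t^*$ be the first time after $0$ at which some $\langle v_i,v\rangle$ reaches $R_0^v+\epsilon$. On $[-\tau,t^*)$ all the scalar products stay below $R_0^v+\epsilon$. Since $t^*-\tau_{ij}(t^*)\in[-\tau,t^*]$, the delayed terms are also bounded by $R_0^v+\epsilon$. Hence the velocity equation in \eqref{csp} gives
\[
\frac{d}{dt}\langle v_i,v\rangle\leq \Big(\sum_{j}\chi_{ij}b_{ij}\Big)\big(R_0^v+\epsilon-\langle v_i,v\rangle\big),
\]
with $b_{ij}\geq 0$ and bounded by $K/(N-1)$. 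A Gronwall argument then shows that $\langle v_i,v\rangle$ cannot reach $R_0^v+\epsilon$ at $t^*$, which is a contradiction. Letting $\epsilon\to0$ gives the invariance.

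The only delicate point is the handling of the delay in this first-hitting-time argument. Because $\tau_{ij}\geq 0$, the delayed arguments never leave the already controlled interval $[-\tau,t^*]$, so the argument goes through.
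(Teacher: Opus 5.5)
Your proof is correct and follows the same route the paper relies on through \cite{CCP}: the bound comes from the invariance estimate of Lemma~\ref{L1cs} with $n=0$, choosing $v=v_i(t)/\lvert v_i(t)\rvert$, and then applying Cauchy--Schwarz. Your optional first-hitting-time argument for the invariance itself is also sound, and it is the standard argument from \cite{CCP}.
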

	Note that, differently from the first-order model \eqref{onoff}, the uniform bound \eqref{boundsolonoffcs} does not allow us to deduce the existence of a positive bound from below on the communication rates since the arguments of the influence function are $\lvert x_i(t)-x_{j}(t-\tau_{ij})\rvert$, $i,j=1,\dots,N$. However, we can deduce the following bounds on the arguments of the influence function, that are local in time but uniform in $i,j$.
	\begin{lem}\label{lemmacs}
		For every $i,j=1,\dots,N$, we get
		\begin{equation}\label{dist}
			\lvert x_{i}(t)-x_{j}(t-\tau_{ij}(t))\rvert\leq \tau C_{0}^{V}+M^{X}_{0}+d_{X}(t), \quad\forall t\geq0,
		\end{equation}
		where $C_{0}^{V}$ is the positive constant in \eqref{boundsolonoffcs}, $d_X(\cdot)$ is the position diameter of the solution defined in \eqref{posdiam} and $M^{X}_{0}$ is the positive constant defined as
        \begin{equation}\label{M_0}
        	M_0^{X}:=\max_{i=1,\dots,N}\,\,\max_{s,t\in [-\tau,0]}\lvert x_{i}(s)-x_i(t)\rvert.
        \end{equation}       
	\end{lem}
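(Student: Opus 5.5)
The estimate \eqref{dist} is obtained by inserting the intermediate point $x_i(t-\tau_{ij}(t))$ and then $x_j(t-\tau_{ij}(t))$:
\[
\lvert x_i(t)-x_j(t-\tau_{ij}(t))\rvert\le \lvert x_i(t)-x_i(t-\tau_{ij}(t))\rvert+\lvert x_i(t-\tau_{ij}(t))-x_j(t-\tau_{ij}(t))\rvert .
\]
The first term measures how far agent $i$ moves in a time window of length at most $\tau$. The second term is a position diameter evaluated at the earlier time $t-\tau_{ij}(t)$. This does not yet give $d_X(t)$, so instead of the split above I would use the three-term split
\[
\lvert x_i(t)-x_j(t-\tau_{ij}(t))\rvert\le \lvert x_i(t)-x_j(t)\rvert+\lvert x_j(t)-x_j(t-\tau_{ij}(t))\rvert\le d_X(t)+\lvert x_j(t)-x_j(t-\tau_{ij}(t))\rvert .
\]
This leaves only the displacement of agent $j$ over the delay window to control.

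To bound the displacement $\lvert x_j(t)-x_j(t-\tau_{ij}(t))\rvert$, set $s:=t-\tau_{ij}(t)\in[t-\tau,t]$ and distinguish two cases.

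\emph{Case $s\ge 0$.} The equation $\dot x_j=v_j$ holds on $[s,t]$, so
\[
\lvert x_j(t)-x_j(s)\rvert\le\int_s^t\lvert v_j(\sigma)\rvert\,d\sigma\le \tau_{ij}(t)\,C_0^V\le\tau C_0^V,
\]
where the last two steps use Lemma \ref{L3onoffcs} and \eqref{taubounded}.

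\emph{Case $s<0$.} Since $t\ge0$, split at $0$:
\[
\lvert x_j(t)-x_j(s)\rvert\le\lvert x_j(t)-x_j(0)\rvert+\lvert x_j(0)-x_j(s)\rvert .
\]
The first piece is at most $tC_0^V\le\tau_{ij}(t)C_0^V\le\tau C_0^V$, again by Lemma \ref{L3onoffcs} (note $t\le\tau_{ij}(t)$ because $s<0$). The second piece involves only initial data on $[-\tau,0]$, where $\dot x=v$ need not hold. It is therefore bounded by $M_0^X$ from \eqref{M_0}.

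In both cases $\lvert x_j(t)-x_j(t-\tau_{ij}(t))\rvert\le\tau C_0^V+M_0^X$. Combining this with the three-term split gives \eqref{dist}.

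The only delicate point is the case where the delayed time falls in the initial interval $[-\tau,0]$. There the initial positions and velocities are independent data, not linked by $\dot x=v$, and this is exactly why the constant $M_0^X$ appears. The rest is a direct application of the uniform velocity bound of Lemma \ref{L3onoffcs}.
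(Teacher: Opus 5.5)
Your proposal is correct and essentially matches the paper's proof. You use the same triangle inequality through $x_j(t)$, giving $d_X(t)$ plus the delay displacement of agent $j$, and the same two-case bound: $\tau C_0^V$ when $t-\tau_{ij}(t)\ge 0$, and a split at time $0$ yielding $M_0^X+\tau C_0^V$ otherwise.
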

    \begin{proof}
   	Given $i,j=1,\dots,N$ and $t\geq0$, we have
   	\begin{equation}\label{split}
   		\begin{array}{l}
   			\vspace{0.3cm}\displaystyle{\lvert x_{i}(t)-x_{j}(t-\tau_{ij}(t))\rvert\leq \lvert x_{i}(t)-x_{j}(t)\rvert+\lvert x_{j}(t)-x_{j}(t-\tau_{ij}(t))\rvert}\\
   			\displaystyle{\hspace{2cm}\leq d_{X}(t)+\lvert x_{j}(t)-x_{j}(t-\tau_{ij}(t))\rvert.}
   		\end{array}
   	\end{equation}
   	We estimate $\lvert x_{j}(t)-x_{j}(t-\tau_{ij}(t))\rvert.$ If $t-\tau_{ij}(t)>0$, from \eqref{taubounded} and \eqref{boundsolonoffcs} we have
   	$$\lvert x_{j}(t)-x_{j}(t-\tau_{ij}(t))\rvert\leq \int_{t-\tau_{ij}(t)}^{t}\lvert v_{j}(s)\rvert ds\leq C^{V}_{0}\tau_{ij}(t)\leq \tau C^{V}_{0}.$$
   	On the other hand, if $t-\tau_{ij}(t)\leq 0$, then $t\leq \tau_{ij}(t)\leq \tau$ and 
   	$$\begin{array}{l}
   		\vspace{0.3cm}\displaystyle{\lvert x_{j}(t)-x_{j}(t-\tau_{ij}(t))\rvert\leq\lvert x_j(0)-x_j(t-\tau_{ij}(t))\rvert+ \int_{0}^{t}\lvert v_{j}(s)\rvert ds}\\
   		\displaystyle{\hspace{2cm}\leq M^{X}_{0}+t C^{V}_{0}\leq M^{X}_0+\tau C^{V}_{0}.}
   	\end{array}$$
   	Therefore, in both cases,
   	$$\lvert x_{j}(t)-x_{j}(t-\tau_{ij}(t))\rvert\leq M^{X}_0+\tau C^{V}_{0},$$
   	from which \eqref{split} becomes
   	$$	\lvert x_{i}(t)-x_{j}(t-\tau_{ij}(t))\rvert\leq M^{X}_0+\tau C^{V}_{0}+d_{X}(t).$$
   	This concludes the proof.
   \end{proof}
Finally, before we prove our flocking result, Theorem \ref{uf}, we need to derive an estimate {analogous to \eqref{Bonoff} for the velocity variables.} To this aim, we introduce the following auxiliary function.
	\begin{defn}
		We define
		$$\phi(t):=\min\left\{\tilde\psi(r):r\in \left[0,\tau C^{V}_{0}+M^{X}_{0}+\max_{s\in[-\tau,t] }d_{X}(s)\right]\right\},$$
		for all $t\geq -\tau$.
	\end{defn}
	\begin{oss}
		Let us note that from \eqref{dist} we have the following non-uniform lower bound on the communication rates
		\begin{equation}\label{weightlowerbound}
			b_{ij}(t)\geq \frac{1}{N-1}{\alpha_{ij}(t)}\phi(t),\quad \forall t\geq 0,\,\forall i,j=1,\dots,N.
		\end{equation}
	\end{oss}
	
	\begin{prop}\label{lemma3'cs}
		For all $n\in \mathbb{N}_0$ and $v\in \mathbb{R}^d$, it holds
		\begin{equation} \label{B'cs}
			r^v_{n}+\tilde\Gamma_{n}(\langle v_r(n(\gamma_r(T+\tau)+\tau)),v\rangle-r^v_{n}) \leq \langle v_i(t),v\rangle\leq R^v_{n}-\tilde\Gamma_{n}(R^v_{n}-\langle v_r(n(\gamma_r(T+\tau)+\tau)),v\rangle), 
		\end{equation}
		for all $t\in I_{n+1}$ and $i =1,\dots, N$, where $I_{n+1}$ is defined in \eqref{In} and		
		\begin{equation}\label{Gammancs}
			\tilde\Gamma_{n}:=e^{-K (\frac{1}{2}(\gamma_r^2+3\gamma_r)(T+\tau)+\tau)}\left(\frac{\phi((n+1)(\gamma_r(T+\tau)+\tau))\tilde{\alpha}}{(N-1)(1+n(\gamma_r(T+\tau)+\tau)+(\gamma_r-1)T+\gamma_r\tau)^\beta}\right)^{\gamma_r},
		\end{equation}
		being $r_n^v$ and $R_n^v$ the quantities defined in \eqref{r_n} and \eqref{R_n}, respectively, $\gamma_r$ the distance from the root defined in \eqref{gamma}, $\beta$ the exponent in \eqref{PE} and $T$ and $\tilde{\alpha}$ the positive constants in \eqref{PE}.		
	\end{prop}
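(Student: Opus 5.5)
The plan is to repeat, for the velocity variables, the proof of Proposition \ref{lemma3onoff}. The only change is that the uniform lower bound $\psi_0$ from \eqref{stima_psionoff} is replaced by the non-uniform bound \eqref{weightlowerbound}, and the time-monotonicity of $\phi$ lets us freeze it at the right endpoint of the working interval. Since $s\mapsto\max_{[-\tau,s]}d_X$ is nondecreasing, the interval over which we minimize $\tilde\psi$ in the definition of $\phi$ grows with $t$, so $\phi$ is nonincreasing. Hence
$$b_{ij}(t)\geq \frac{\alpha_{ij}(t)}{N-1}\,\phi\bigl((n+1)(\gamma_r(T+\tau)+\tau)\bigr)$$
for all $t\in[n(\gamma_r(T+\tau)+\tau),(n+1)(\gamma_r(T+\tau)+\tau)]$. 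This constant lower bound plays exactly the role of $\psi_0$ on that interval. Note also $\phi\le K$, so with $\tilde\alpha\le 1/K$ the factor $\tilde\Gamma_n$ lies in $(0,1)$.

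Fix $n$, $v$, and $i\neq r$, and take the shortest path $r=i_0,i_1,\dots,i_p=i$, with $p\le\gamma_r$. First, for the root, Lemma \ref{L1cs} gives $\frac{d}{dt}\langle v_r,v\rangle\le K(R_n^v-\langle v_r,v\rangle)$, using $b_{rj}\le K/(N-1)$. Gronwall then yields
$$\langle v_r(t),v\rangle\le R_n^v-e^{-K(\gamma_r(T+\tau)+\tau)}\bigl(R_n^v-\langle v_r(n(\gamma_r(T+\tau)+\tau)),v\rangle\bigr)$$
on the whole interval. Inductively, for $i_k$ on $t\ge n(\gamma_r(T+\tau)+\tau)+(k-1)(T+\tau)+\tau$, we split the sum into the edge from $i_{k-1}$ and the remaining edges. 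The remaining edges are bounded using $R_n^v$. For the edge $b_{i_ki_{k-1}}$, the delayed argument $t-\tau_{i_ki_{k-1}}(t)$ falls in the range where the step-$(k-1)$ estimate holds, and this produces the negative forcing term $-b_{i_ki_{k-1}}(t)\cdot(\text{gap}_{k-1})$. We bound $b_{i_ki_{k-1}}$ from below by $\alpha\phi/(N-1)$ with $\phi$ frozen as above, bound the damping rate by $KN_{i_k}/(N-1)\le K$, and apply Gronwall. Integrating $\alpha_{i_ki_{k-1}}$ over a window of length $T$ and using \eqref{PE} gives a gain factor
$$\frac{\phi\bigl((n+1)(\gamma_r(T+\tau)+\tau)\bigr)\,\tilde\alpha}{(N-1)\bigl(1+n(\gamma_r(T+\tau)+\tau)+(k-1)T+k\tau\bigr)^\beta}.$$
The exponential losses accumulate exactly as in \eqref{5.13onoff}.

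At $k=p$ the estimate holds for $t\ge n(\gamma_r(T+\tau)+\tau)+p(T+\tau)$, which covers $I_{n+1}$. We then pass to the coarsest bound: we take $p=\gamma_r$ in both the exponent and the power, use that each gain factor is at most $1$, and replace each denominator by the largest one, at $k=\gamma_r$. This gives the upper inequality in \eqref{B'cs} with $\tilde\Gamma_n$ as in \eqref{Gammancs}. The case $i=r$ is covered directly by the root estimate, since $\tilde\Gamma_n$ is smaller than the factor $e^{-K(\gamma_r(T+\tau)+\tau)}$ obtained there. The lower inequality follows symmetrically, using $r_n^v$ and reversed inequalities, or equivalently by replacing $v$ with $-v$.

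The main point requiring care is the lower bound on $b_{ij}$. We must check that \eqref{weightlowerbound} applies with the delayed argument, which \eqref{dist} guarantees. We must also check that freezing $\phi$ at the right endpoint is legitimate, which relies on the monotonicity of $\phi$ and on all estimated times lying before $(n+1)(\gamma_r(T+\tau)+\tau)$. Apart from this, the bookkeeping of the constants is identical to that of Proposition \ref{lemma3onoff}.
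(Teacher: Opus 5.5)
Your proposal is correct and follows essentially the same route as the paper. Both arguments run a Gronwall estimate for the root, propagate it along the shortest path from $r$ with the uniform bound $\psi_0$ replaced by $\phi$ frozen at $(n+1)(\gamma_r(T+\tau)+\tau)$ (using that $\phi$ is nonincreasing), apply the generalized PE condition edge by edge, and take the coarsest estimate at $k=\gamma_r$. The only difference is cosmetic: you freeze $\phi$ before applying Gronwall, whereas the paper keeps $\phi(s)$ inside the integral and freezes it afterwards.
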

	\begin{oss}
		Let us note that, due to $\tilde{\alpha}\leq \frac{1}{K}$, we have $\tilde{\Gamma}_n\in (0,1)$.
	\end{oss}
	\begin{proof}
     The proof follows the same scheme of the one of Proposition \ref{lemma3onoff}, with only some little changes. We thus provide a brief sketch of the proof. Let $n\in \mathbb{N}_0$ and $v \in \mathbb{R}^d$. Let us fix an index $i=1,\dots,N$ with $i\neq r$. Let $i_0=r,i_1,\dots,i_p=i$, $1\leq p\leq \gamma_r$, be the shortest path from $r$ to $i$. Then, arguing as in Proposition \ref{lemma3onoff} we get
		\begin{equation}\label{firstgroncs}
			\langle v_r(t),v\rangle\leq R_n^v-e^{-K(\gamma_r(T+\tau)+\tau)}(R_n^v-\langle v_r(n(\gamma_r(T+\tau)+\tau)),v\rangle),
		\end{equation}
         for all $t\in [n(\gamma_r(T+\tau)+\tau),(n+1)(\gamma_r (T+\tau)+\tau)].$ 
		\\Now, for a.e. $t \in [n(\gamma_r(T+\tau)+\tau)+\tau,(n+1)(\gamma_r (T+\tau)+\tau)]$, arguing as in Proposition \ref{Bonoff} we find
		$$\frac{d}{d t}  \langle v_{i_1}(t),v\rangle \leq \frac{KN_{i_{1}}}{N-1} R_n^v-\alpha_{i_1r}(t)\frac{\phi(t)}{N-1}e^{-\tilde K(\gamma_r(T+\tau)+\tau)}(R_n^v-\langle v_r(n(\gamma_r(T+\tau)+\tau)),v\rangle)-\frac{  KN_{i_{1}}}{N-1}\langle v_{i_1}(t),v\rangle,$$
		from which Gronwall's inequality yields
		$$\langle v_{i_1}(t),v\rangle\leq 
		R^v_n-e^{- K(2\gamma_r(T+\tau)+\tau)}(R_n^v-\langle v_r(n(\gamma_r(T+\tau)+\tau)),v\rangle)\frac{1}{N-1}\int_{n(\gamma_r(T+\tau)+\tau)+\tau}^{t} \phi(s)\alpha_{i_1r}(s)ds,$$
		for all $t\in [n(\gamma_r(T+\tau)+\tau)+\tau,(n+1)(\gamma_r(T+\tau)+\tau)]$. Now, since $\phi$ is nonincreasing we can write
		$$\phi(t)\geq \phi((n+1)(\gamma_r(T+\tau)+\tau)),\quad \forall t\in [n(\gamma_r(T+\tau)+\tau)+\tau,(n+1)(\gamma_r(T+\tau)+\tau)].$$
		Hence,
		$$ 
		\begin{array}{l}
			\displaystyle{
		\langle v_{i_1}(t),v\rangle\leq R^v_n}\\
		\displaystyle{ \hspace{0.1 cm}-e^{-K(2\gamma_r(T+\tau)+\tau)} (R_n^v-\langle v_r(n(\gamma_r(T+\tau)+\tau)),v\rangle \frac{\phi((n+1)(\gamma_r(T+\tau)+\tau))}{N-1}\int_{n(\gamma_r(T+\tau)+\tau)+\tau}^{t}\alpha_{i_1r}(s)ds,}
		\end{array}
		$$
		for all $t\in [n(\gamma_r(T+\tau)+\tau)+\tau,(n+1)(\gamma_r(T+\tau)+\tau)]$. In particular, for $t\in [n(\gamma_r(T+\tau)+\tau)+T+\tau,(n+1)(\gamma_r(T+\tau)+\tau)]$, using \eqref{PE} we obtain
		\begin{equation}\label{i_1Tcs}
			\langle v_{i_1}(t),v\rangle\leq R^v_n-e^{- K(2\gamma_r(T+\tau)+\tau)}(R_n^v-\langle v_r(n(\gamma_r(T+\tau)+\tau)),v\rangle)\frac{\phi((n+1)(\gamma_r(T+\tau)+\tau))\tilde{\alpha}}{(N-1)(1+n(\gamma_r(T+\tau)+\tau)+\tau)^\beta}.
		\end{equation}
	The rest of the proof proceeds as the one of Proposition \ref{lemma3onoff}.	
	\end{proof}

	\begin{oss}
		Let us note that, analogously to estimate \eqref{D_nonoff}, from \eqref{B'cs} it comes that, for all $v\in\mathbb{R}^d$,
		\begin{equation}\label{claim1}
			R_{n+1}^v-r_{n+1}^v\leq (1-\tilde\Gamma_{n})(R_{n}^v-r_{n}^v),\quad\forall n\in \mathbb{N}_0.
		\end{equation}
		Moreover, setting $C^*:=e^{-K (\frac{1}{2}(\gamma_r^2+3\gamma_r)(T+\tau)+\tau)}\left(\frac{\tilde{\alpha}}{N-1}\right)^{\gamma_r},$ since
		\begin{equation}\label{gammaneq}
			\tilde\Gamma_{n}=C^*\left(\frac{\phi((n+1)(\gamma_r(T+\tau)+\tau))}{(1+n(\gamma_r(T+\tau)+\tau)+(\gamma_r-1)T+\gamma_r\tau)^\beta}\right)^{\gamma_r},\quad \forall n\in\mathbb{N}_0,
		\end{equation}
	we can rewrite \eqref{claim1} as follows
	\begin{equation}\label{claim1eq}
		R_{n+1}^v-r_{n+1}^v\leq \left(1-C^*\left(\frac{\phi((n+1)(\gamma_r(T+\tau)+\tau))}{(1+n(\gamma_r(T+\tau)+\tau)+(\gamma_r-1)T+\gamma_r\tau)^\beta}\right)^{\gamma_r}\right)(R_{n}^v-r_{n}^v),\quad\forall n\in \mathbb{N}_0.
	\end{equation}
In particular, since $\tilde{\Gamma}_n$ does not depend on the choice of the vector $v$, by definition of $F_n$ (see \eqref{GVD}) estimates \eqref{claim1} and \eqref{claim1eq} imply
\begin{equation}\label{F_n}
	F_{n+1}\leq (1-\tilde\Gamma_{n})F_n,\quad \forall n\in \mathbb{N}_0,
\end{equation}
and
\begin{equation}\label{F_neq}
	F_{n+1}\leq \left(1-C^*\left(\frac{\phi((n+1)(\gamma_r(T+\tau)+\tau))}{(1+n(\gamma_r(T+\tau)+\tau)+(\gamma_r-1)T+\gamma_r\tau)^\beta}\right)^{\gamma_r}\right)F_n,\quad \forall n\in \mathbb{N}_0.
\end{equation}
	\end{oss} 
    We are now able to prove our flocking result.
    \begin{proof}[Proof of Theorem \ref{uf}]
    Let $\tilde\Gamma_n$ be defined in \eqref{Gammancs}. We claim that there exist positive constants $a,b$ such that
    \begin{equation}\label{Gammanhat}
        \tilde{\Gamma}_n\geq \hat{\Gamma}_n:=\frac{C^*}{(a+bn)^{(\eta+\beta)\gamma_r}},\quad \forall n\in\mathbb{N}_0,
    \end{equation}
    where $\eta\in (0,1)$ is the constant in \eqref{infint}.
    Indeed, let $n\in\mathbb{N}_0$. Then, since \eqref{boundsolonoffcs} implies $d_X(t)\leq d_X(0)+2C_0^Vt$, for all $t\geq 0$, we have
    $$\max_{t\in[-\tau,(n+1)(\gamma_r(T+\tau)+\tau)]}d_X(t)\leq D_0^X+2C_0^V(n+1)(\gamma_r(T+\tau)+\tau),$$
    where $D_0^X$ is defined as $$D_0^X:=\max_{t\in [-\tau,0]}d_X(t).$$
    By definition of $\phi$, this gives
    $$\phi((n+1)(\gamma_r(T+\tau)+\tau))\geq \min\{\tilde\psi(r):r\in \left[0,\tau C_0^V+M_0^X+D_0^X+2C_0^V(n+1)(\gamma_r(T+\tau)+\tau)\right] \}.$$
    Using condition \eqref{infint}, we obtain
    $$\begin{array}{l}
    \vspace{0.3cm}\displaystyle{\phi((n+1)(\gamma_r(T+\tau)+\tau))\geq \min\left\{\frac{1}{(1+r)^\eta}:r\in \left[0,\tau C_0^V+M_0^X+D_0^X+2C_0^V(n+1)(\gamma_r(T+\tau)+\tau)\right] \right\}}\\
    \displaystyle{\hspace{2cm}=\frac{1}{(1+\tau C_0^V+M_0^X+D_0^X+2C_0^V(n+1)(\gamma_r(T+\tau)+\tau))^\eta}.}
    \end{array}$$
    Consequently, by definition of $\tilde\Gamma_n$,
    $$\begin{array}{l}
    \vspace{0.3cm}\displaystyle{\tilde\Gamma_n\geq C^*\frac{1}{(1+n(\gamma_r(T+\tau)+\tau)+(\gamma_r-1)T+\gamma_r\tau)^{\beta\gamma_r}}\times}\\ \vspace{0.3cm}
    \displaystyle{\hspace{3cm}\times\frac{1}{(1+\tau C_0^V+M_0^X+D_0^X+2C_0^V(n+1)(\gamma_r(T+\tau)+\tau))^{\eta\gamma_r}}}\\
    \displaystyle{\hspace{3cm}\geq \frac{C^*}{(a+b n)^{(\eta+\beta)\gamma_r}},}
    \end{array}$$
    where $$a:=1+\max\{(\gamma_r-1)T+\gamma_r\tau, \tau C_0^V+M_0^X+D_0^X+2C_0^V(\gamma_r(T+\tau)+\tau)\},$$
    $$b:=(\gamma_r(T+\tau)+\tau)\max\{1,2C_0^V\}.$$
    Hence, \eqref{Gammanhat} holds true. 
    \\Now, combining \eqref{Gammanhat} with \eqref{F_neq} we can write
    \begin{equation}\label{F_neqnuova}
    F_{n+1}\leq (1-\hat{\Gamma}_n)F_0,\quad \forall n\in \mathbb{N}_0. 
    \end{equation}
    Hence, since the terms $\hat{\Gamma}_n$ are analogous to the quantities $\Gamma_n$ defined in \eqref{Gammaonoff}, we can argue as in the the proof of Theorem \ref{consres} to get, using \eqref{diamonoff2cs} and \eqref{F_neqnuova},
    \begin{equation}\label{decadimento}
    d_V(t)\leq e^{{\sum\limits_{k=0}^{n-1}\ln (1-\hat{\Gamma}_k)}}F_0,
    \end{equation}
    for all $t\geq n(\gamma_r(T+\tau)+\tau)-\tau$, for all $n\in \mathbb{N}$. Once we have such an estimate, we can repeat the same argument in Theorem \ref{consres} to obtain, thanks to the condition $(\eta+\beta)\gamma_r<1$, the decay of the velocity diameter, namely $d_V(t)\to 0$, as $t\to +\infty$. Hence, condition (ii) in the Definition \ref{unflock} is satisfied.

    At this point, in order to obtain the asymptotic flocking for system \eqref{csp}, it remains to show that the position diameter $d_X$ is uniformly bounded. We elaborate more on the decay estimate \eqref{decadimento}. Let $n\in\mathbb{N}$ and $t\geq n(\gamma_r(T+\tau)+\tau)-\tau$. Using that $\ln(1-x)\leq -x$, for $x\in (0,1)$, and the definition of $\hat{\Gamma}_k$, we have
    $$d_V(t)\leq e^{-\sum\limits_{k=0}^{n-1}\hat{\Gamma}_k}F_0=  e^{-\sum\limits_{k=0}^{n-1}\frac{C^*}{(a+bk)^{(\eta+\beta)\gamma_r}}}F_0.$$
    Notice that
    $$\begin{array}{l}
    \vspace{0.3cm}\displaystyle{\sum_{k=0}^{n-1}\frac{C^*}{(a+bk)^{(\eta+\beta)\gamma_r}}=\sum_{k=0}^{n-1}\int_{k}^{k+1}\frac{C^*}{(a+bk)^{(\eta+\beta)\gamma_r}}dx\geq \sum_{k=0}^{n-1}\int_{k}^{k+1}\frac{C^*}{(a+bx)^{(\eta+\beta)\gamma_r}}dx}\\
    \displaystyle{\hspace{2cm}=\int_0^n \frac{C^*}{(a+bx)^{(\eta+\beta)\gamma_r}} dx=\frac{C^*}{b(1-(\eta+\beta)\gamma_r)}((a+bn)^{1-(\eta+\beta)\gamma_r}-a^{1-(\eta+\beta)\gamma_r}).}
    \end{array}$$
    Therefore, 
    \begin{equation}\label{decadimento2}
        d_V(t)\leq e^{-\frac{C^*}{b(1-(\eta+\beta)\gamma_r)}(a+bn)^{1-(\eta+\beta)\gamma_r}}e^{\frac{C^*}{b(1-(\eta+\beta)\gamma_r)}a^{1-(\eta+\beta)\gamma_r}}F_0,\quad \forall t\geq n(\gamma_r(T+\tau)+\tau)-\tau,\,\forall n\in \mathbb{N}.
    \end{equation}
    This implies that $\int_0^{+\infty}d_V(t)dt<+\infty.$ Indeed, from \eqref{diamonoff2cs} and \eqref{decadimento2}
    \begin{equation}\label{stimaint}
        \begin{array}{l}
    \vspace{0.3cm}\displaystyle{\int_0^{+\infty}d_V(t)dt=\int_0^{\gamma_r(T+\tau)}d_V(t)dt+\int_{\gamma_r(T+\tau)}^{+\infty}d_V(t)dt=\int_0^{\gamma_r(T+\tau)}d_V(t)dt+\sum_{n=1}^{\infty}\int_{n(\gamma_r(T+\tau)+\tau)-\tau}^{(n+1)(\gamma_r(T+\tau)+\tau)-\tau}d_V(t)dt}\\
    \displaystyle{\hspace{1cm}\leq F_0(\gamma_r(T+\tau)+\tau)+F_0(\gamma_r(T+\tau)+\tau)e^{\frac{C^*}{b(1-(\eta+\beta)\gamma_r)
    }a^{1-(\eta+\beta)\gamma_r}}\sum_{n=1}^{\infty}e^{-\frac{C^*}{b(1-(\eta+\beta)\gamma_r)}(a+bn)^{1-(\eta+\beta)\gamma_r}}.}
    \end{array}
    \end{equation}
    Now, $\sum\limits_{n=1}^{\infty}e^{-\frac{C^*}{b(1-(\eta+\beta)\gamma_r)}(a+bn)^{1-(\eta+\beta)\gamma_r}}<+\infty$. Indeed, 
   { \begin{equation}\label{asintotico}
        e^{-\frac{C^*}{b(1-(\eta+\beta)\gamma_r)}(a+bn)^{1-(\eta+\beta)\gamma_r}}\underset{n\to\infty}{\sim} e^{-cn^{1-(\eta+\beta)\gamma_r}}.
    \end{equation}
    for a positive constant $c.$
    Since for $t$ sufficiently large $e^{ct}\geq t^p$, for all $p>0$, we can choose $p=\frac{2}{1-(\eta+\beta)\gamma_r}>0$ to get
    $$e^{-cn^{1-(\eta+\beta)\gamma_r}}\leq \frac{1}{n^{(1-(\eta+\beta)\gamma_r)\frac{2}{1-(\eta+\beta)\gamma_r}}}=\frac{1}{n^2},$$
    for $n$ large.} Thus, since $\sum\limits_{n=1}^{\infty}\frac{1}{n^2}<+\infty$, we can conclude that $\sum\limits_{n=1}^{\infty}e^{-n^{1-(\eta+\beta)\gamma_r}}<+\infty$. Hence, from \eqref{asintotico}, $\sum\limits_{n=1}^{\infty}e^{-\frac{C^*}{b(1-(\eta+\beta)\gamma_r)}(a+bn)^{1-(\eta+\beta)\gamma_r}}<+\infty$, which gives, due to \eqref{stimaint}, $\int_0^{+\infty}d_V(t)dt<+\infty$. \\Finally, using that $\int_0^{+\infty}d_V(t)dt<+\infty$, since  
    $$d_X(t)\leq d_X(0)+\int_0^{t}d_V(t),\quad \forall t\geq 0,$$
    we deduce that $d_X$ is uniformly bounded, i.e. condition (i) in Definition \ref{unflock} is fulfilled. This concludes the proof.

    \end{proof}
	\begin{oss}[Case $\beta=0$]\label{beta=0}
			Assume $\beta=0$, namely the weights $\{\alpha_{ij}\}_{i,j=1,\dots,N}$ satisfy the {classical Persistence Excitation condition \eqref{classicalPE}.} Then the flocking result Theorem \ref{uf} holds under the weaker condition \eqref{infint2}, instead of \eqref{infint}. Moreover, the unconditional flocking is achieved exponentially fast. Indeed, in this case we can perform the Lyapunov functional approach in \cite{CCP}. We provide here just some details. 
            \\Let us define
		$$\Theta_n=\frac{\tilde\Gamma_{n}}{\gamma_r(T+\tau)+\tau}, \quad\forall n\in \mathbb{N}_0.$$
		We introduce the function $\mathcal{E}:[-\tau,+\infty)\rightarrow [0,+\infty),$ 
		$$\mathcal{E}(t):=\begin{cases}
			F_{0}, \hspace{4.5cm}t\in [-\tau,\gamma_r(T+\tau)+\tau],\\
            \mathcal{E}(n(\gamma_r(T+\tau)+\tau))\left(1-\Theta_n(t-n(\gamma_r(T+\tau)+\tau))\right), \\
            \vspace{0.2cm}\hspace{5cm}t\in (n(\gamma_r(T+\tau)+\tau),(n+1)(\gamma_r(T+\tau)+\tau)],\,n\geq 1.
		\end{cases}$$
		By definition, $\mathcal{E}$ is continuous, positive and nonincreasing. Furthermore, arguing as in \cite{CCP}, one can use an induction argument to prove that
        \begin{equation}\label{boundIn}
			F_{n}\leq \mathcal{E}(t),\quad \forall t\in [-\tau,n(\gamma_r(T+\tau)+\tau)],\,\forall n\in\mathbb{N}_0.
		\end{equation} 
	Next, we define the Lyapunov functional $\mathcal{W}:[-\tau,+\infty)\rightarrow [0,+\infty)$, 
 $$\mathcal{W}(t):=(\gamma_r(T+\tau)+\tau)\mathcal{E}(t)+C^*\int_{0}^{\tau C^{V}_{0}+M^{X}_{0}+\underset{s\in [-\tau,t+\gamma_r(T+\tau)+\tau]}{\max}d_{X}(s)}\left(\min_{\sigma\in [0,r]}\tilde\psi(\sigma)\right)^{\gamma_r}\,dr.$$
By definition, $\mathcal{W}$ is continuous. Moreover, for every $n\geq 1$ and for a.e. $t\in(n(\gamma_r(T+\tau)+\tau),(n+1)(\gamma_r(T+\tau)+\tau)) $, \eqref{diamonoff2cs} and \eqref{boundIn} imply that (see \cite{CCP} for further details)
      $$ \begin{array}{l}		\vspace{0.3cm}\displaystyle{ \frac{d}{dt}\mathcal{W}(t)=(\gamma_r(T+\tau)+\tau)\frac{d}{dt}\mathcal{E}(t)+C^*(\phi(t+\gamma_r(T+\tau)+\tau))^{\gamma_r}\,\frac{d}{dt}\underset{s\in [-\tau,t+(\gamma_r(T+\tau)+\tau)]}{\max}d_{X}(s)}\\          \displaystyle{\hspace{1cm}\leq-\mathcal{E}(n(\gamma_r(T+\tau)+\tau))\Theta_n(\gamma_r(T+\tau)+\tau)+C^*(\phi(t+\gamma_r(T+\tau)+\tau))^{\gamma_r}d_V(t+\gamma_r(T+\tau)+\tau)\leq 0.}       
      \end{array}$$
		Thus, \begin{equation}\label{negder}
        \frac{d}{dt}\mathcal{W}(t)\leq0,\quad \text{a.e. }t>\gamma_r(T+\tau)+\tau,
		\end{equation}
	which implies \begin{equation}\label{2tau1}
		\mathcal{W}(t)\leq \mathcal{W}(\gamma_r(T+\tau)+\tau),\quad \forall t\geq \gamma_r(T+\tau)+\tau.
		\end{equation}          
            Once we obtain that $\mathcal{W}$ is bounded, we can use condition \eqref{infint2} to prove the existence of a uniform bound $d^*$ on the position diameter $d_X$ using the same contradiction argument in \cite{CCP}.
            \\Finally, using that $d_X$ is uniformly bounded in \eqref{dist}, we deduce the following bound from below on the influence function
            $$\tilde\psi(\lvert x_i(t)-x_j(t-\tau_{ij}(t))\rvert)\geq \hat{\phi}:=\min_{r\in [0,\tau C_0^V+M_0^X+d^*]}\tilde\psi(r),\quad \forall t \geq 0.$$
            This last fact together with \eqref{F_neq} yields
            $$F_{n+1}\leq (1-C^*\hat{\phi}^{\gamma_r})F_n,\quad \forall n\in \mathbb{N}_0.$$
            Then, repeating the same argument in Remark \ref{casobeta0}, we can show that
            $$d_V(t)\leq  e^{-\frac{1}{\gamma_r(T+\tau)+\tau}\ln\left(\frac{1}{1-C^*\hat{\phi}^{\gamma_r}}\right)(t-\gamma_r(T+\tau)-\tau)}F_0,\quad \forall t\geq 0,$$
            where $F_0$ is the initial generalized velocity diameter. So, system \eqref{csp} exhibits unconditional flocking and the velocity diameter decays exponentially fast.  

\vspace{0.1cm}
            
            Let us also point out that, in particular, if the influence function $\psi$ is nonincreasing and $\gamma_r=1$, namely the root can directly influence all the other agents, the unconditional exponential flocking is achieved under the classical condition \eqref{unflockcond}. In general, when the influence function is not necessarily monotonic and the root could not directly influence all the other agents, the classical condition \eqref{unflockcond} is not sufficient for proving the unconditional flocking and the stronger condition \eqref{infint2} has to be required (see also \cite{CCP, Cont}).
	\end{oss}
    \noindent {\bf Acknowledgements.} {\small The authors are members of Gruppo Nazionale per l’Analisi Matematica,
    	la Probabilità e le loro Applicazioni (GNAMPA) of the Istituto Nazionale di
    	Alta Matematica (INdAM). }

\end{document}